\documentclass[12pt]{amsart}
\usepackage{graphicx} 
\usepackage[top=3cm, bottom=5cm, left=3cm, right=3cm]{geometry}

\usepackage[english]{babel}
\usepackage{amsmath}
\usepackage{amsthm}
\usepackage{amsfonts}
\usepackage{graphicx}
\usepackage{tikz-cd}
\usepackage[colorlinks=true, allcolors=black]{hyperref}
\usepackage{comment}
\usepackage{mathtools}
\usepackage[normalem]{ulem}
\usepackage{amsaddr}

\newtheorem{theorem}{Theorem}[section]

\newtheorem{prop}[theorem]{Proposition}

\newtheorem{cor}[theorem]{Corollary}

\theoremstyle{definition}
\newtheorem{definition}[theorem]{Definition}

\newtheorem{example}[theorem]{Example}

\numberwithin{equation}{section}

\newcommand{\bZ}{{\mathbb Z}}
\newcommand{\bR}{\mathbb R}
\newcommand{\bC}{{\mathbb C}}

\newcommand{\ra}{\rightarrow}

\begin{document}
\raggedbottom
\title{Adapted complex polarizations on solvable Lie groups of dimension 3}
\author{Balázs Forman}
\address{ELTE - Eötvös L. Univ., Dept. of Anal.,\\
 P\'azm\'any P\'eter s\'et\'any 1/C, 1117, Budapest, Hungary}
\email{formanbalazsattila@gmail.com}
\keywords{Adapted complex polarization, connections}
\subjclass[2020]{32Q99,22E99,53C22,53C05,34M99}

\keywords{Adapted complex structures, polarizations, left-invariant connections, biinvariant connections}
\subjclass[2010]{32Q99, 22E99, 53C22, 53C05, 34M99}

\begin{abstract}

In this paper we investigate adapted complex polarizations associated with left-invariant Koszul connections on real Lie groups. Adapted complex polarizations arise as a natural generalization of a complex structure on a manifold. We investigate the existence of such polarizations on the tangent bundle of certain Lie groups with left-invariant connections. We use a practical criterion for the existence of global adapted complex polarizations in terms of the $i\mathbb{R}$-completeness of the geodesic flow of the connection. In the Lie case, the Euler–Arnold vector field determines the behaviour of the geodesic flow. By combining what we know about the Euler-Arnold vector field of certain Lie groups and the $i\bR$-completeness criterion, we provide several examples of left-invariant ac-polarizations defined on the tangent bundle of Lie groups. Along with pseudo-Riemannian metric connections, we discuss biinvariant connections in several cases. Our focus is on solvable Lie groups, because non-solvable Lie groups are better understood. The main result of this paper is that we demonstrate several not locally symmetric but $\mathcal P$-complete connections.
\end{abstract}

\maketitle

\section{Introduction}

Adapted complex structures on tangent bundles were introduced by Lempert and Szőke in their study of the homogeneous complex Monge-Ampère equation and have since become an important tool in the complexification of Riemannian and more general Koszul manifolds \cite{LSz91}, \cite{Sz04}, \cite{L19}. Their work and subsequent developments by Halverscheid, Iannuzzi \cite{HI03}, \cite{HI09} and others revealed deep connections between geodesic geometry, complexification, and involutive structures on tangent bundles. In particular, the existence of a global adapted complex structure is closely related to the holomorphic extension properties of geodesics.

A natural generalization of the complexification of Koszul manifolds is obtained by replacing complex structures with involutive structures and polarizations. Following Lempert, one may consider adapted complex polarizations (ac-polarizations), which are involutive complex subbundles of the complexified tangent bundle that are compatible with the geodesic flow. This notion is particularly well suited to left-invariant connections on Lie groups.

For a left-invariant Koszul connection on a Lie group, geodesics are governed by the Euler-Arnold equation, a fundamental observation going back to Arnold \cite{A66}. Thus, questions about adapted complex polarizations can be translated into questions about the holomorphic extendability of solutions of quadratic homogeneous differential equations on the Lie algebra. The key result used throughout this paper is a criterion that asserts that if the Euler-Arnold vector field is $i\mathbb{R}$-complete in the complexified Lie algebra, then the universal complexification of the Lie group induces a global ac-polarization on its tangent bundle \cite{FSz26}. Consequently, the problem of constructing adapted complex polarizations becomes a problem in complex ODE theory.

The main goal of this paper is to apply this principle to left-invariant pseudo-Riemannian metrics and biinvariant connections on solvable Lie groups of dimension three. To do this, recall that the classification of Lie groups in dimension 3 is called the Bianchi classification. \cite{GKKL15} Many of the groups of dimension three are already well-understood.  Type I groups are Abelian, which are trivial from our point of view, since metric connections vanish and any left-invariant connection is biinvariant. Type II groups are isomorphic to the Heisenberg group, which was treated (along with all other 2-step nilpotent groups) in \cite{FSz26}. Type VIII and Type IX Lie groups are simple and were discussed in \cite{FSz26} and \cite{HI09}, respectively. All other Lie groups of dimension three are solvable and are discussed in detail in this article.

Our first set of results concerns the motion groups $E(2)$ and $E(1,1)$. Using Guediri’s \cite{G96} classification of left-invariant Lorentz metrics and the corresponding Euler-Arnold equations, we determine exactly which metrics have $i\mathbb{R}$-complete Euler--Arnold flows. In each case, the analysis reduces to Riccati-type or quartic differential equations, allowing a complete description of the polarization-complete metrics. 

We then study biinvariant connections. A simple family of such connections exists on Lie groups whose commutator ideal has codimension one, and we show that these connections are always polarization complete. We further analyze solvable non-unimodular Lie groups in dimension three, obtaining a description of all torsion-free biinvariant connections in the degenerate case and exhibiting examples that fail to be $i\mathbb{R}$-complete.

Finally, we consider the broader class of general $\mathcal F$-type Lie groups. We prove that these groups admit no biinvariant pseudo-Riemannian metrics, classify all torsion-free biinvariant connections on them, establish their polarization completeness, and compute explicit formulas for their curvature tensor and its covariant derivative.

The paper is organized as follows. Section 2 recalls the necessary background on adapted complex structures, polarizations, complexifications, and Euler--Arnold equations. Section 3 develops the analytic tools used to test $i\mathbb{R}$-completeness of polynomial vector fields. Sections 4 and 5 classify polarization-complete left-invariant pseudo-Riemannian metrics on $E(2)$ and $E(1,1)$, respectively. Section 6 discusses biinvariant connections, while Sections 7 and 8 treat solvable non-unimodular Lie groups and general $\mathcal F$-type Lie groups.

\noindent \textbf{Acknowledgement.} This work was done while the author participated in the PhD program of the Institute of Mathematics at Eötvös Lóránd University. He would like to thank his supervisor Róbert Szőke and the Institute for their hospitality and very inspiring scientific atmosphere. Also, he received a scholarship from the Hungarian state and one from the Erzsébetváros district of Budapest, for which he is grateful.

\section{Polarizations}

All definitions, theorems, and propositions in this section are from the joint work with Róbert Szőke in \cite{FSz26}.

First, we look at the general setup. In case we have a Koszul manifold $(M,\nabla)$ we can define a map $\Omega_v$ associated to the geodesic with initial velocity $v$ going from a complex domain to $TM$ as follows.

\begin{definition}
    Let $M$ be a smooth manifold together with a  Koszul connection $\nabla$ on $TM$. 
For  $I=(a,b)$, $a<0<b$ let
\begin{equation}\label{E:ueps}
\bC I=\{z=\sigma+i\tau\in\mathbb C \mid \text{Re }z\in I\}.
\end{equation}
Let $v\in TM$ and 
$\gamma_v:I\ra M$ be the geodesic
with initial condition $\dot\gamma_v(0)=v$. Define
\begin{equation}\label{F:Omegav}
\begin{aligned}
\Omega_v:\bC I&\ra TM\\
\sigma+i\tau&\mapsto \tau\dot{\gamma}(\sigma)
\end{aligned}
\end{equation}
\end{definition}

The image of such a map is a two-dimensional submanifold of $TM$, and we want to construct a complex structure on $TM$ such that each of these leaves is a complex submanifold.

\begin{definition} 
 Let  $N\subset TM$ be an open, fiberwise $*$-domain (i.e. $v\in N$ and $0\le s\le1$ implies $sv\in N$)  with $M\subset N$. An {\it adapted complex structure} (ac-structure for short) on $N$ means a complex manifold structure so that for every
$v\in TM$, the map $\Omega_v$ is holomorphic on $\Omega_v^{-1}(N)$.
$\nabla$ is called {\it entire}, if an ac-structure exists on $TM$.
A  pseudo-Riemannian manifold $(M,g)$    is called {\it entire} if the Lévi-Civita connection of $g$ is entire.\cite[Definition 2.1.]{FSz26} 
\end{definition}

Constructing ac-structures is immensely difficult; therefore, we define a weaker structure, the so-called ac-polarizations. 

\begin{definition}
  Let $N$ be a smooth manifold. An involutive structure  on $N$
 is a smooth, involutive, complex subbundle $P$ of $\mathbb CTN$.  Involutivity means that the Lie bracket of local smooth sections of  $P$  is again
 a section of $P$. An involutive structure $P$ 
 is called a polarization   if $\dim N$
 is even and
 rank$P=\frac1{2}\dim N$.  \cite[Definition 2.2.]{FSz26}
\end{definition}

Since $N$ is not a complex manifold, it does not have complex submanifolds. Instead, what we can require is that the pushforward of $T^{1,0}\bC I$ be in $P$. This is a direct generalization, since in the case of $P=T^{1,0}N$ this is equivalent to the requirement $\Omega_v$ be holomorphic on $\Omega^{-1}(N)$ for all $v\in TM$.

 \begin{definition}
 Let   $(M,\nabla)$ be a Koszul manifold and  $N\subset TM$ an open, fiberwise $*$-domain with $M\subset N$.
 An  involutive complex subbundle $P\subset\bC TN$ is called an {\it adapted involutive  structure} (ai-structure)  if 
 ${\Omega_v}_*(T^{1,0}(\Omega_v^{-1}(N))\subset P$ for every $v\in TM$. If the ai-structure $P$ is a polarization, then we call it an  {\it adapted complex polarization} (or ac-polarization).    
 $\nabla$ is said to be {\it polarization complete} or simply $\mathcal P$-{\it complete} if its ac-polarization  exists on $TM$.   
  A pseudo-Riemannian manifold $(M,g)$ is said to be $\mathcal P$-{\it complete}  if its Levi-Civita connection is $\mathcal P$-{\it complete}.\cite[Definition 2.3.]{FSz26}
\end{definition}

The notion of ac-polarization is more general than that of an ac-structure. An ac-polarization $P$ is an ac-structure if and only if $P\cap\overline P=0$. For constructing an ac-polarization, we use the method of Lempert \cite{L19}.

We first define the domain in which our ac-polarization lives.

 \begin{definition}
     Let $(M,\nabla)$ be a Koszul manifold, $Z$ be a complex manifold and $f: M \ra Z$ a smooth map. For  $v\in T_pM$ let $\gamma_v:I_p\ra M$ be the $\nabla$-geodesic with initial condition $\dot\gamma_v(0)=v$  and $\chi_v:=f\circ\gamma_v$. Define
 \begin{equation}\label{E:Ni} 
N_i(f,Z):=\{v\in TM : \chi_v\text{ extends holomorpically to a neighborhood of } [0,i]\}
 \end{equation}

 For  $v\in T_pM$, let  $\Omega_v$ be   the map from \eqref{F:Omegav} and  $\chi^\bC_v $ the holomorphic extension of $\chi_v$.
 \end{definition}

 This motivates the following definition.

\begin{definition}\label{D:polarmap}
  Let $(M,\nabla)$ be a Koszul manifold, $Z$ a complex manifold and $f:M\ra Z$ a smooth map. The corresponding {\it polar map} is
  \begin{equation}\label{E:polarmap}
      \psi_f:N_i(f,Z)\ra Z,\qquad v\mapsto
      \chi^\bC_v(i).
  \end{equation}\cite[Definition 2.5.]{FSz26}
\end{definition}

To use both the connection of $M$ and the complex structure of $Z$ we need to introduce the notion of holomorphic Koszul connections.

\begin{definition}
    Let $Z$ be a complex manifold, $T^{1,0}Z$ its holomorphic tangent bundle and $\pi:T^{1,0}Z\ra Z$ the bundle projection.  A holomorphic Koszul connection $\mathfrak D$ on $T^{1,0}Z$, is a $\bC$-linear map (of sheaves) $\mathfrak D:T^{1,0}Z\ra\Omega_Z\otimes T^{1,0}Z$ which satisfies the Leibniz rule
 \begin{equation}
     \mathfrak D(f\cdot s)=\partial f\otimes s+f\cdot\mathfrak D s,
 \end{equation}
 where $\Omega_Z$ denotes the holomorphic cotangent sheaf, $f$ any local holomorphic function on $Z$ and $s$ any local holomorphic vector field on $Z$. For a $w\in T^{1,0}Z$, let $\Gamma_w$ be the holomorphic $\mathfrak D$-geodesic defined in some open neighborhood $D\subset \bC$ of $0$ with initial condition $\dot\Gamma^{1,0}_w(0)=w$ and define $\chi_w:D\ra T^{1,0}Z$, with $\zeta\mapsto\dot\Gamma^{1,0}_w(\zeta)$ and finally let $\mathcal V(w):=\dot\chi^{1,0}_w(0)$.  Then $\mathcal V :X\ra T^{1,0}X$ is a holomorphic (1,0) vector field on the complex manifold $X:=T^{1,0}Z$, the total space of the holomorphic tangent bundle.\end{definition}

If we look at a map $f:M\ra Z$ between a real and a complex Koszul manifold, there is no natural way to connect $\nabla$ to  $\mathfrak D$. We need an additional step, which is to associate a real Koszul connection $\hat\nabla$ to $\mathfrak{D}$.

 \begin{definition}
     The connection $\mathfrak D$ induces an ordinary  Koszul connection $\widehat\nabla$ on $TZ$ as follows. The $\bC$-bundle isomorphism  $\xi:TZ\ra T^{1,0}Z, u\mapsto u^{1,0}$ yields a complex manifold structure on the smooth manifold $TZ$ by pulling back the complex manifold structure of the total space $T^{1,0}Z$. The pulled-back vector field $\xi^*(V)$ is a real vector field on the total space  $TZ$, the geodesic spray of $\widehat\nabla$. Looking at  this from a different angle:  $\mathfrak D+\bar{\partial}$ will be an ordinary Koszul connection on the complex vector bundle $T^{1,0}Z$ that yields via the  $\bC$-bundle isomorphism $\xi$ the Koszul connection $\widehat \nabla$ on $TZ$.
 \end{definition}

 One can show that for  any local holomorphic (1,0) vector field $u$ in $Z$, 
$\widehat\nabla (u+\bar u)=\mathfrak Du+\overline{\mathfrak Du}$ and in fact   this formula could be used to define $\widehat \nabla$. 
What is important for us is the following property of $\widehat\nabla$ (c.f. \cite{LB80, LB83}):  
\begin{equation}\label{E:geod}
\widehat\nabla\text{-geodesics are precisely the restrictions of holomorphic } \mathfrak D\text{-geodesics to } \mathbb R.
\end{equation}

With the help of $\hat \nabla$ we can now define what we mean by the complexification of a Koszul manifold.

 \begin{definition}\label{D:complexification}
 Let $M$ be a real-analytic manifold with a real-analytic  Koszul connection $\nabla$. A {\it complexification} of $(M,\nabla)$ is a triple $(Z,\mathfrak D, \iota)$, where $Z$ is a complex manifold, $\mathfrak D$  a holomorphic Koszul connection on $T^{1,0}Z$, $\widehat\nabla$ the corresponding real Koszul connection on $TZ$ and $\iota:M\ra Z$ a maximally totally real, real-analytic immersion with  $\nabla=\iota^*(\widehat\nabla)$.  \cite[Definition 5.1.]{FSz26}
\end{definition}

 \begin{definition}\label{D:icompl-complex}
    The   complexification $(Z,\mathfrak D,\iota)$ of $(M,\nabla)$ is called {\it $\bC$-complete}, if for every $v\in TM$, $\chi_v$ extends holomorphically to $\bC$; it is called {\it $i\bR$-complete} if for each $v$, $\chi_v$ extends holomorphically to a neighborhood of $i\bR$ or equivalently if $N_i(\iota,Z)=TM$.\cite[Definition 5.3.]{FSz26}
\end{definition}

\subsection{Lie groups}

The main focus of this paper is a special class of manifolds, Lie groups. Lie groups have natural algebraic complexifications, and we can use the group structure to investigate not just any connection on them, but left-invariant ones.

Let $G$ be a real Lie group with unit element $e$ and Lie algebra  $\mathfrak g=T_eG$.
 Left invariant Koszul connections $\nabla$ on $TG$ and   $\mathbb R$-bilinear maps 
$\alpha:\mathfrak g\times\mathfrak g\ra\mathfrak g$ 
bijectively correspond to each other 
 by  
$\alpha(x,y):=(\nabla_xy)_e$, where
$x,  y\in T_eG$  (the same letters denote the left invariant vector field extensions as well).

\begin{definition}The vector field $\mathcal E$ on $\mathfrak g$, defined by the quadratic homogeneous polynomial map
 $\mathfrak g\ra \mathfrak g$, $x\mapsto -\alpha(x,x)$ 
 is called the {\it Euler-Arnold vector field}  of $\nabla$. 
\end{definition}

The Euler-Arnold vector field is essential for our work, since, due to the following Proposition, the behavior of left-invariant geodesics is governed by it.

\begin{prop}\label{P:leftinvconn}
    Let  $G$ be a real Lie group with Lie algebra $\mathfrak g$. Let $\nabla$ be a left invariant Koszul connection on $TG$ and $\alpha:\mathfrak g\times\mathfrak g\ra\mathfrak g$ the 
 corresponding bilinear map.
Let $\gamma:(a,b)\ra G$ be a smooth curve and define $x:(a,b)\ra\mathfrak g$ by 
$x(t):=(L_{\gamma(t)}^{-1})_*(\dot\gamma(t))$. Then $\gamma$ is a $\nabla$-geodesic iff
$$\dot x+\alpha(x,x)=0,$$
i.e. iff $x$ is an integral curve of the Euler-Arnold vector field.\cite[Proposition 6.1.]{FSz26}
\end{prop}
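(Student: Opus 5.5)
We compute $\nabla_{\dot\gamma}\dot\gamma$ in a left-invariant frame. Choose a basis $e_1,\dots,e_n$ of $\mathfrak g$ and denote by the same letters the left-invariant vector fields on $G$ extending them. By definition of $x(t)$, we have $\dot\gamma(t)=(L_{\gamma(t)})_*x(t)$. Writing $x(t)=\sum_j x^j(t)e_j$ with smooth real coefficients $x^j:(a,b)\ra\bR$, this gives $\dot\gamma(t)=\sum_j x^j(t)\,e_j(\gamma(t))$. Thus $\dot\gamma$ is a vector field along $\gamma$ expressed in the global frame $\{e_j\}$, with coefficient functions $x^j$.

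Next we apply the Leibniz rule for the induced covariant derivative along $\gamma$:
$$\nabla_{\dot\gamma}\dot\gamma=\sum_j \dot x^j(t)\,e_j(\gamma(t))+\sum_j x^j(t)\,(\nabla_{\dot\gamma}e_j)(\gamma(t)).$$
Tensoriality of $\nabla$ in the lower slot gives
$$(\nabla_{\dot\gamma}e_j)(\gamma(t))=\sum_k x^k(t)\,(\nabla_{e_k}e_j)(\gamma(t)).$$
Since $\nabla$ is left invariant and $e_k,e_j$ are left invariant, $\nabla_{e_k}e_j$ is a left-invariant vector field. Its value at $e$ is $\alpha(e_k,e_j)$, so $(\nabla_{e_k}e_j)(g)=(L_g)_*\alpha(e_k,e_j)$. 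This step should be stated carefully: left invariance of $\nabla$ means $(L_g)_*(\nabla_XY)=\nabla_{(L_g)_*X}(L_g)_*Y$, and applying it to $L_g$-invariant fields gives invariance of $\nabla_{e_k}e_j$.

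Combining the two displays and using bilinearity of $\alpha$,
$$\nabla_{\dot\gamma}\dot\gamma=(L_{\gamma(t)})_*\Bigl(\dot x(t)+\alpha\bigl(x(t),x(t)\bigr)\Bigr).$$
Since $(L_{\gamma(t)})_*$ is a linear isomorphism $\mathfrak g\ra T_{\gamma(t)}G$, $\gamma$ is a geodesic, i.e. $\nabla_{\dot\gamma}\dot\gamma=0$, if and only if $\dot x+\alpha(x,x)=0$. That is exactly the statement that $x$ is an integral curve of $\mathcal E(x)=-\alpha(x,x)$.

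No step is a real obstacle. The only point needing care is justifying the Leibniz/frame expansion of the covariant derivative along a curve when the frame fields are global left-invariant fields rather than coordinate fields. This is standard, since the induced connection on $\gamma^*TG$ satisfies $D_t(fV)=\dot fV+fD_tV$, and $D_t(e_j\circ\gamma)=\nabla_{\dot\gamma}e_j$.
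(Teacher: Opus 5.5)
Your proof is correct and complete. Expanding $\dot\gamma=\sum_j x^j\,(e_j\circ\gamma)$ in the global left-invariant frame, applying the Leibniz rule for the covariant derivative along $\gamma$, and using that $\nabla_{e_k}e_j$ is left invariant gives $\nabla_{\dot\gamma}\dot\gamma=(L_{\gamma(t)})_*\bigl(\dot x+\alpha(x,x)\bigr)$, which is exactly what is needed. The paper gives no proof of this proposition and only quotes it from \cite[Proposition 6.1]{FSz26}, so there is no in-paper argument to compare against; your frame computation is the standard proof of this fact.
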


It is important to point out that a left-invariant connection $\nabla$ on $TG$ is $\bC$-complete (resp. $i\bR$-complete) exactly when the corresponding Euler-Arnold vector field is $\bC$-complete (resp. $i\bR$-complete).

In the metric case, the Koszul formula describes the Levi-Civita connection in terms of the metric. If our connection is both metric and left-invariant, we arrive at the following conclusion.

\begin{cor}\label{C:EAequation} Let $G$
be a real Lie group and $q$ a left-invariant pseudo-Riemannian metric on $TG$. Then the Levi-Civita connection $\nabla$ of $q$ is
 \begin{equation}\label{E:LeviCivita}
  \nabla_XY=\frac1{2}\{[X,Y]-ad^*_XY-ad^*_YX\}, \quad X, Y\in\mathfrak g.   
 \end{equation}      
       Let $\gamma:(a,b)\ra G$ be a smooth curve and   $x:(a,b)\ra\mathfrak g$ defined by 
$x(t):=(L_{\gamma(t)}^{-1})_*(\dot\gamma(t))$. Then
     $\gamma$ is a geodesic iff
$$
\dot x=ad^*_xx.
$$ 
 The Euler-Arnold vector field is $\mathcal E(x)=-\alpha(x,x)=ad^*_xx$.\cite[Corollary 6.2.]{FSz26}
\end{cor}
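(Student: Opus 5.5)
The plan is to derive \eqref{E:LeviCivita} from the Koszul formula, restricted to left-invariant vector fields, and then to plug it into Proposition \ref{P:leftinvconn}.

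\textbf{Step 1: simplify the Koszul formula.} Let $X,Y,Z\in\mathfrak g$ be viewed as left-invariant vector fields. Since $q$ is left-invariant, the functions $q(Y,Z)$, $q(X,Z)$, $q(X,Y)$ are constant on $G$. Hence all derivative terms in the Koszul formula vanish, and it reduces to
\[
2q(\nabla_XY,Z)=q([X,Y],Z)-q([Y,Z],X)+q([Z,X],Y).
\]

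\textbf{Step 2: rewrite with the adjoint.} Here $ad^*_X$ is the $q$-adjoint of $ad_X$, defined by $q(ad^*_XY,Z)=q(Y,[X,Z])$. Rewrite the last two terms:
\[
q([Y,Z],X)=q(ad_YZ,X)=q(Z,ad^*_YX),
\]
\[
q([Z,X],Y)=-q(ad_XZ,Y)=-q(Z,ad^*_XY).
\]
Substituting gives
\[
2q(\nabla_XY,Z)=q([X,Y]-ad^*_YX-ad^*_XY,\,Z)
\]
for all $Z$. Since $q$ is nondegenerate (pseudo-Riemannian suffices), this yields \eqref{E:LeviCivita}.

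\textbf{Step 3: the Euler-Arnold field and the geodesic equation.} Setting $Y=X=x$ kills the bracket, so
\[
\alpha(x,x)=\nabla_xx=-ad^*_xx,
\]
and therefore $\mathcal E(x)=-\alpha(x,x)=ad^*_xx$. Proposition \ref{P:leftinvconn} then states that $\gamma$ is a geodesic iff $\dot x=-\alpha(x,x)=ad^*_xx$.

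\textbf{Main obstacle.} The only real care needed is bookkeeping of signs and of the adjoint convention in Step 2. I would check this against the sign convention for $ad^*$ so that the two adjoint terms come out symmetric. The rest is routine.
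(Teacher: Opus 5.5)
Your proof is correct and takes essentially the same approach the paper indicates: the Koszul formula restricted to left-invariant fields, rewritten using the $q$-adjoint of $ad$, and then fed into Proposition \ref{P:leftinvconn}. Your convention $q(ad^*_XY,Z)=q(Y,[X,Z])$ is the one under which the stated signs are correct, and your sign bookkeeping in Step 2 checks out.
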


The following theorem will be our reference point throughout this article.

\begin{theorem}\label{T:nabla-i-complete} Let $G$ be a connected, real Lie group with Lie algebra $\mathfrak g$, $\nabla$ a left invariant Koszul connection  on $TG$ and $\eta:G\ra G_\bC$ the universal complexification. Suppose the Euler-Arnold vector field  $\mathcal E$    is $i\bR$-complete in $\mathfrak g\otimes\bC$.(See Definition \ref{D:iR complete}).
Then $N_i(\eta,G_\bC)=TG$ and with the polar map $\psi_\eta$, associated to $\eta$,
$P:=(\psi_\eta)^{-1}_*(T^{1,0}G_\bC)$ is a $G$-invariant ac-polarization on $TG$.
In particular
$\nabla$ is $\mathcal P$-complete.\cite[Theorem 7.5.]{FSz26}
\end{theorem}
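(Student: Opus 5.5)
The plan is to reduce $i\bR$-completeness of the geodesic flow to $i\bR$-completeness of $\mathcal E$ using Proposition \ref{P:leftinvconn}, and then use the polar map machinery. The proof splits into three steps.

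\textbf{Step 1: holomorphic continuation of geodesics into $G_\bC$.} Fix $v\in T_pG$ and set $x_0:=(L_p^{-1})_*v\in\mathfrak g$. The Euler–Arnold field extends $\bC$-bilinearly to a holomorphic quadratic field on $\mathfrak g\otimes\bC$, which we still call $\mathcal E$. By assumption, the solution $x(\zeta)$ with $x(0)=x_0$ extends holomorphically to a neighborhood $U$ of $\bR\cup i\bR$ (or at least of $i\bR$ together with the real interval of definition). On $U$ I solve the holomorphic ODE
$$\Gamma'(\zeta)=(L_{\Gamma(\zeta)})_*x(\zeta),\qquad \Gamma(0)=\eta(p),$$
in $G_\bC$. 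This is a linear equation for a curve in a complex Lie group, since it is right-translation covariant. Its solutions exist along every path in $U$: one can use a faithful-enough local argument, or the standard fact that ODEs $\dot g=g\,x(t)$ on Lie groups never blow up. They are also path-independent once $U$ is chosen simply connected, for instance a thin tubular neighborhood of the cross $[a',b']\cup[0,i]$.

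By Proposition \ref{P:leftinvconn}, applied on the real axis, $\Gamma|_{\bR}=\eta\circ\gamma_v$, because $\eta$ is a homomorphism whose differential is the inclusion $\mathfrak g\to\mathfrak g\otimes\bC$. Hence $\chi_v=\eta\circ\gamma_v$ extends holomorphically to a neighborhood of $[0,i]$, so $v\in N_i(\eta,G_\bC)$, and therefore $N_i(\eta,G_\bC)=TG$.

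\textbf{Step 2: the polar map and the pulled-back bundle.} The polar map is $\psi_\eta(v)=\Gamma_v(i)$. Writing $v=(L_p)_*x_0$ and using left-invariance, we get $\psi_\eta(v)=\eta(p)\cdot\Phi(x_0)$, where $\Phi(x_0)$ is the time-$i$ value of the solution started at $e$. By smooth dependence of holomorphic ODE solutions on initial data, $\psi_\eta$ is smooth, real-analytic in fact. It is also equivariant with respect to left translations by $G$.

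Set $P:=(\psi_\eta)^*T^{1,0}G_\bC:=\{w\in\bC TTG:(\psi_\eta)_*w\in T^{1,0}G_\bC\}$. This is the preimage of an involutive subbundle, and involutivity is automatic wherever the preimage has constant rank: locally, $P$ is the annihilator of the pullbacks $\psi_\eta^*dz_j$, and pullbacks of closed forms are closed. Adaptedness follows from the defining property: the map $\sigma+i\tau\mapsto \psi_\eta(\tau\dot\gamma(\sigma))$ equals $\Gamma_v(\sigma+i\tau)$, which is holomorphic. Hence $(\psi_\eta\circ\Omega_v)_*T^{1,0}\subset T^{1,0}G_\bC$, that is, $(\Omega_v)_*T^{1,0}\subset P$. $G$-invariance follows from equivariance.

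\textbf{Step 3: the rank (main obstacle).} I must show that $P$ has rank exactly $\dim G$. Equivalently, $d\psi_\eta$ composed with the projection onto $T^{1,0}$ must be surjective at every point, i.e. $\psi_\eta$ is a submersion from the $2n$-manifold $TG$ onto the $2n$-real-dimensional $G_\bC$, hence a local diffeomorphism. If this failed somewhere, the rank of $P$ would jump.

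The plan is to use left-invariance: $\psi_\eta(p,x_0)=\eta(p)\Phi(x_0)$ with $d\psi$ in the $G$ directions giving $\mathfrak g$, and $d\Phi$ in the fibre directions. Then $P$ at $(p,x_0)$ is cut out by $n$ complex linear conditions, $(L_{\Phi}^{-1}Ad)\,\xi + d\Phi(\dot x)\in$ the $(1,0)$-part. The annihilator is spanned by $n$ complex one-forms obtained by composing the left Maurer–Cartan $(1,0)$ form of $G_\bC$ with $\psi_\eta$. I would show these $n$ forms are $\bC$-linearly independent on $\bC T(TG)$ by evaluating them on the $G$-directions alone. There the map is $\xi\mapsto \mathrm{Ad}_{\Phi(x_0)^{-1}}\xi\in\mathfrak g\otimes\bC$, which is $\bC$-linear injective after complexifying $\mathfrak g\to\mathfrak g\otimes\bC$. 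Thus the $n$ forms are independent and rank$P=n$ everywhere, without needing $\psi_\eta$ to be a local diffeomorphism.

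This independence check is the step I expect to need care: identifying $(\psi_\eta)_*$ on complexified vectors correctly. With it in hand, $P$ is a $G$-invariant ac-polarization and $\nabla$ is $\mathcal P$-complete.
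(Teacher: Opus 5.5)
The paper gives no proof of this statement; it is quoted from \cite{FSz26}. The material recalled in Section 2 (complexifications $(Z,\mathfrak D,\iota)$, property \eqref{E:geod}, and $i\bR$-complete complexifications, characterized by $N_i(\iota,Z)=TM$) suggests the route taken there, though this is an inference. One equips $G_\bC$ with the left-invariant holomorphic connection $\mathfrak D$ given by the $\bC$-bilinear extension of $\alpha$. One then checks that $(G_\bC,\mathfrak D,\eta)$ is a complexification whose $i\bR$-completeness is that of $\mathcal E$. Finally one applies a general theorem: for an $i\bR$-complete complexification, the polar map pulls $T^{1,0}Z$ back to an ac-polarization on $TM$.

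Your proof is correct and more direct. You never need $\mathfrak D$ or the general theorem. You integrate $\Gamma'=\Gamma\,x(\zeta)$ by hand and obtain $\psi_\eta((L_p)_*x_0)=\eta(p)\Phi(x_0)$, which makes smoothness and $G$-equivariance transparent. You get the rank from the $G$-directions alone: there $\psi_\eta^*\omega$ is $\mathrm{Ad}_{\Phi(x_0)^{-1}}|_{\mathfrak g}$, whose $\bC$-linear extension is the automorphism $\mathrm{Ad}_{\Phi(x_0)^{-1}}$ of $\mathfrak g\otimes\bC$. In Jacobi-field language, you use only the Jacobi fields coming from right-invariant vector fields (generators of left translations, which are affine for $\nabla$), and these span at every point. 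The general route works on arbitrary Koszul manifolds with no group of affine maps available; yours is self-contained with explicit formulas.

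Some points in the write-up need repair; none is fatal.

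\begin{itemize}
\item $T^{1,0}G_\bC$ is the common kernel of the $(0,1)$-forms. So $P$ is the annihilator of $\psi_\eta^*d\bar z_j$, equivalently of $\overline{\psi_\eta^*\omega}$. The forms $\psi_\eta^*dz_j$ and $\psi_\eta^*\omega$ you name cut out $\overline P$. Conjugation preserves $\bC$-linear independence and closedness, so the argument survives, but it should be stated correctly.
\item Drop the sentence equating $\mathrm{rank}\,P=n$ with $\psi_\eta$ being a submersion. Being a local diffeomorphism is equivalent to $P\cap\overline P=0$, i.e.\ to $P$ being an ac-structure, and that can fail. What is needed, and what you then prove, is only that the complexified $\pi^{0,1}\circ d\psi_\eta$ is onto.
\item Adaptedness must hold on all of $\Omega_v^{-1}(TG)=\bC I$, including $\tau<0$, so $\Gamma_v$ must live on the whole strip, not just near a cross. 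Define it at $\sigma+i\tau$ by continuation along the vertical segment from $\sigma$. Identify this value with $\psi_\eta(\tau\dot\gamma_v(\sigma))$ via $\gamma_{\tau w}(s)=\gamma_w(\tau s)$ and $\gamma_{\dot\gamma_v(\sigma)}(s)=\gamma_v(\sigma+s)$. Get holomorphy in $\sigma+i\tau$ by comparing with the continuation from a nearby $\sigma_0$, using the identity theorem on a thin rectangle around $[\sigma_0,\sigma_0+i\tau_0]$.
\item The equation $\Gamma'=\Gamma x$ is invariant under left translations, not right ones.
\item You use that $d\eta_e$ is the inclusion $\mathfrak g\hookrightarrow\mathfrak g\otimes\bC$. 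This is implicit in the statement and holds for simply connected $G$. It can fail otherwise: take a quotient of $\widetilde{SL_2(\bR)}\times\bR$ by a discrete central subgroup whose image in $SL_2(\bC)\times\bC$ is dense in $\{1\}\times\bR$. There $\dim_\bC G_\bC<\dim G$, so the rank count and the conclusion fail. Record it as a standing assumption.
\end{itemize}
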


\section{\texorpdfstring{$i\bR$}-complete vector fields}

In this section, we investigate several vector fields in terms of $i\bR$-completeness. Our main goal is to collect tools to determine the $i\bR$-completeness of the Euler-Arnold vector fields.

\begin{definition}\label{D:iR complete}
 A smooth real vector field $\mathcal W$ on $\bR^n$ is called   {\it $\bC$-complete} (resp. 
      {\it $i\bR$-complete}) if every  trajectory extends  to
a holomorphic map  $\bC\ra\bC^n$ (resp.  
$U\ra\bC^n$, where $U$ is
some open neighborhood of $i\bR$, that can depend on the trajectory).\cite[Definition 7.4.]{FSz26}
 \end{definition}

\begin{theorem}\label{thm:x'^2}
    Let $c_1,c_2\in\bC$, then the following ODE:

    \begin{equation}\label{eq: x'^2}
     \dot{x}^2(t)=c_1 x^2(t)+c_2 x^4(t),
    \end{equation}

    has solutions as follows:
    \begin{enumerate}
        \item If $c_1=0=c_2$, then $x(t)$ is constant.
        \item If $c_1=0$, $c_2\neq 0,$ then $$x(t)=\frac{x(0)}{1\pm x(0)\sqrt{c_2}t}$$
        \item If $c_1\neq 0$, $c_2=0$ then $$x(t)=x(0)e^{\pm\sqrt{c_1}t}$$
        \item If $c_1\neq 0\neq c_2,$ then $$x(t)=\sqrt{\frac{c_1}{c_2}}\frac{2c_3e^{\sqrt{c_1}t}}{1-c_3^2 e^{2\sqrt{c_1}t}},$$
    \end{enumerate}

    where $c_3=\frac{-\sqrt{\frac{c_1}{c_2}}\pm\sqrt{x(0)^2+\frac{c_1}{c_2}}}{x(0)}\in\bC$, if $x(0)\neq 0$ and $c_3=0$, if $x(0)=0$. The square root sign can take both possible complex values in each case. Furthermore, under the assumption that $c_1,c_2,c_3\in\bR$, the solution is $i\bR$-complete iff $c_2\geq 0$.
\end{theorem}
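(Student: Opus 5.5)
The ODE $\dot x^2=c_1x^2+c_2x^4$ is first order and autonomous, so I will solve it by separation of variables and check each formula by direct differentiation. Then I will study the pole structure of the explicit solutions along the imaginary axis. Taking square roots gives $\dot x=\pm x\sqrt{c_1+c_2x^2}$, with the constant solution $x\equiv 0$ covering $x(0)=0$.

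The cases go as follows.
\begin{enumerate}
\item If $c_1=c_2=0$, then $\dot x=0$ and $x$ is constant.
\item If $c_1=0\neq c_2$, then $\dot x=\pm\sqrt{c_2}x^2$. This is Riccati type, and $d(1/x)/dt=\mp\sqrt{c_2}$ gives $x(t)=x(0)/(1\mp x(0)\sqrt{c_2}t)$.
\item If $c_2=0\neq c_1$, the equation is linear, $\dot x=\pm\sqrt{c_1}x$, giving the exponential solution.
\item If both are nonzero, I substitute $u=1/x$, which gives $\dot u^2=c_1u^2+c_2$. This is solved by $u=\frac{1}{2}\sqrt{c_2/c_1}\,(c_3^{-1}e^{-\sqrt{c_1}t}-c_3e^{\sqrt{c_1}t})$, and inverting yields the stated formula. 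Rather than integrate, I will verify it directly: with $E=c_3e^{\sqrt{c_1}t}$ and $x=\sqrt{c_1/c_2}\,2E/(1-E^2)$, one gets $\dot x=\sqrt{c_1}\sqrt{c_1/c_2}\,2E(1+E^2)/(1-E^2)^2$. Hence $\dot x^2-c_1x^2=c_1\frac{c_1}{c_2}\frac{4E^2\,[(1+E^2)^2-(1-E^2)^2]}{(1-E^2)^4}=c_2x^4$.
\end{enumerate}
Setting $t=0$ gives $x(0)(1-c_3^2)=2c_3\sqrt{c_1/c_2}$. This quadratic in $c_3$ has the quoted roots, with $c_3=0$ when $x(0)=0$. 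Uniqueness of solutions to $\dot x=\pm x\sqrt{c_1+c_2x^2}$ away from branch points of the square root then identifies every solution with one of these formulas. Near $x=0$, or where $c_1+c_2x^2=0$, I would argue with the $u=1/x$ form or by analytic continuation in the constants.

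For the $i\bR$-completeness claim with $c_1,c_2,c_3\in\bR$, I look for singularities on the line $t=i\tau$. I will rewrite the case (4) solution as $x=\sqrt{c_1/c_2}/\sinh(-\sqrt{c_1}t-\log c_3)$, up to the obvious modifications when $c_3<0$. The poles are then the points $\sqrt{c_1}t\in -\log c_3+i\pi\bZ$. In case (2), the pole $t=\pm1/(x(0)\sqrt{c_2})$ lies on $i\bR$ exactly when $\sqrt{c_2}$ is imaginary, i.e. $c_2<0$; when $c_2\ge 0$ it is real, and the solution is holomorphic near $i\bR$.

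In case (4) the subcases are:
\begin{enumerate}
\item If $c_1>0$, then $\sqrt{c_1}$ is real, the poles lie on horizontal lines $\mathrm{Im}\,t=\pi k/\sqrt{c_1}$ with fixed real part $-\log|c_3|/\sqrt{c_1}$, and they meet $i\bR$ only when $|c_3|=1$.
\item If $c_1<0$, then $\sqrt{c_1}t$ is real on $i\bR$, so $E$ has modulus $|c_3|$ off $i\bR$. Poles occur on $i\bR$ precisely when $1-E^2$ vanishes there.
\end{enumerate}

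The main obstacle is the bookkeeping in this last step. One has to relate the signs of $c_1,c_2$ to whether $c_3$ can have $|c_3|=1$ or whether $E^2=1$ is attainable on $i\bR$. The key constraint is that a real $c_3$ together with real $x(0)$ forces $\sqrt{c_1/c_2}$ to be real, i.e. $c_1c_2>0$. So if $c_2<0$, then $c_1<0$, $\sqrt{c_1}$ is imaginary, and $e^{2\sqrt{c_1}i\tau}$ is real on $i\bR$ and sweeps out all positive values, so a pole occurs on $i\bR$. If $c_2>0$, then $c_1>0$, and $|c_3|=1$ would give $x(0)=\infty$ or $x(0)=0$. The degenerate case $c_3=0$ is trivially entire. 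I would carry out this sign analysis carefully, including $c_3<0$, to conclude that the solution is $i\bR$-complete iff $c_2\ge0$.
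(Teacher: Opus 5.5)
Your argument is correct, but it takes a different route from the paper's.

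\textbf{Closed form.} The paper does not verify its formula by differentiation. It separates variables, substitutes $x=\sqrt{c_1/c_2}\,(v-v^{-1})/2$, and integrates to $x=\sqrt{c_1/c_2}/\sinh(\sqrt{c_1}t+c)$ with $\sinh c=\sqrt{c_1/(c_2x(0)^2)}$. The poles are therefore at $t_0=(ik\pi-c)/\sqrt{c_1}$.

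\textbf{The $i\bR$ part.} Despite the stated hypothesis, the paper really works with $c_1,c_2,x(0)\in\bR$, not with $c_3\in\bR$. It runs through all four sign patterns of $(c_1,c_2)$ and decides whether $t_0\in i\bR$ using $\sinh(a+ib)=\sinh a\cos b+i\cosh a\sin b$.

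You instead combine $c_3\in\bR$ with $x(0)\in\bR$, so that $x(0)(1-c_3^2)=2c_3\sqrt{c_1/c_2}$ forces $c_1c_2>0$. Only two cases remain:
\begin{itemize}
\item if $c_1>0$, then $|E|\equiv|c_3|\neq1$ on $i\bR$;
\item if $c_1<0$, then $E^2$ is real and sweeps out $(0,\infty)$ on $i\bR$.
\end{itemize}

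\textbf{What each approach buys.} Yours is shorter, avoids the $\sinh$ bookkeeping, and fits the literal hypothesis exactly. The price is that it never sees the mixed-sign cases $c_1c_2<0$. For real $x(0)\neq0$, these are precisely the cases with $c_3\notin\bR$. They are also the relevant ones for real trajectories with $c_2<0$: there $\dot x^2\ge0$ forces $c_1+c_2x^2\ge0$, hence $c_1>0$. This is the situation in Case 1 for $E(2)$, where $c_2=-1$. The paper's version addresses these cases; yours does not.

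\textbf{Hypotheses to state explicitly.} Make $x(0)\in\bR\setminus\{0\}$ an explicit hypothesis, because it is indispensable. With only $c_1,c_2,c_3$ real, take $c_1>0>c_2$ and a real $c_3\notin\{0,\pm1\}$. Then $x(0)$ is purely imaginary, and no pole lies on $i\bR$, contradicting the claimed equivalence. Likewise, $c_3=0$ gives $x\equiv0$, which is complete for every $c_2$.

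\textbf{Minor points.}
\begin{itemize}
\item Your uniqueness remark overreaches. At zeros of $c_1+c_2x^2$ there are extra constant solutions $x\equiv\pm\sqrt{-c_1/c_2}$, and no analytic continuation removes them. In your setting $c_1c_2>0$ these have non-real initial values, so this is harmless.
\item In your $c_1<0$ item, the identity $|E|=|c_3|$ holds on $\bR$, not off $i\bR$.
\end{itemize}
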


\begin{proof}

    The first three cases are elementary and show thatentire solution exists only if $c_2=0$.

    In the case when $c_1\neq0\neq c_2$, note that since equation \eqref{eq: x'^2} is separable, it suffices to solve 

    \begin{equation}\label{eq:integral}
        \int \frac{\dot{x}(t)dt}{x(t)\sqrt{1+\frac{c_2}{c_1} x^2}}=\sqrt{c_1}t+c.
    \end{equation}

    We now consider the following substitution:

    \begin{equation}\label{eq:substitution}\begin{split}
        x(t)&=\sqrt{\frac{c_1}{c_2}}\frac{v(t)-\frac{1}{v(t)}}{2} \\
        \dot{x}(t)&=\dot{v}(t)\sqrt{\frac{c_1}{c_2}}\frac{1+\frac{1}{v^2(t)}}{2}=\dot{v}(t)\sqrt{\frac{c_1}{c_2}}\frac{v(t)+\frac{1}{v(t)}}{2v(t)} \\
        1+\frac{c_2}{c_1}x^2(t)&=1+\left(\frac{v(t)-\frac{1}{v(t)}}{2}\right)^2=\left(\frac{v(t)+\frac{1}{v(t)}}{2}\right)^2
    \end{split}\end{equation}

    The left hand side of equation \eqref{eq:integral} becomes

    \begin{equation}\label{eq:integral sub}
        \begin{split}
        \int \frac{\dot{x}(t)dt}{x(t)\sqrt{1+\frac{c_2}{c_1} x^2(t)}}&=\int \frac{\dot{v}(t)\sqrt{\frac{c_1}{c_2}}\frac{v(t)+\frac{1}{v(t)}}{2v(t)}dt}{\sqrt{\frac{c_1}{c_2}}\frac{v(t)-\frac{1}{v(t)}}{2}\frac{v(t)+\frac{1}{v(t)}}{2}}\\
        &=\int \frac{2\dot{v}(t)dt}{v^2(t)-1}=\int\frac{\dot{v}(t)dt}{v(t)-1}-\int\frac{\dot{v}(t)dt}{v(t)+1}\\
        &=\log\left(\frac{v(t)-1}{v(t)+1}\right)+C
        \end{split}
    \end{equation}

    Using equation \eqref{eq:integral sub} and exponentiating both sides of equation \eqref{eq:integral} we obtain

    \begin{equation}
        \frac{v(t)-1}{v(t)+1}=e^{\sqrt{c_1}t+c}.
    \end{equation}

    By rearranging, we get
    
    \begin{equation}
        v(t)=\frac{1+e^{\sqrt{c_1}t+c}}{1-e^{\sqrt{c_1}t+c}}.
    \end{equation}     

    Using substitution \eqref{eq:substitution} 
    
    \begin{equation}\begin{split}\label{eq:x'^2 x}
        x(t)&=\sqrt{\frac{c_1}{c_2}}\frac{\frac{1+e^{\sqrt{c_1}t+c}}{1-e^{\sqrt{c_1}t+c}}-\frac{1-e^{\sqrt{c_1}t+c}}{1+e^{\sqrt{c_1}t+c}}}{2}\\
        &=\sqrt{\frac{c_1}{c_2}}\frac{2e^{\sqrt{c_1}t+c}}{1-e^{2(\sqrt{c_1}t+c)}}\\
        &=\sqrt{\frac{c_1}{c_2}}\frac{2c_3e^{\sqrt{c_1}t}}{1-c_3^2 e^{2\sqrt{c_1}t}}
        \\
        &=\sqrt{\frac{c_1}{c_2}}\frac{1}{\sinh({\sqrt{c_1}t+c})},
    \end{split}\end{equation}
    where $c_3\in\bC$, and $c_3^2\neq 1$. This function has a pole at $t_0$ exactly when

    \begin{equation}
        \sqrt{c_1}t_0+c=ik\pi,
    \end{equation}

    where $k\in\bZ$. So

    \begin{equation}\label{eq:poles}
        t_0=\frac{ik\pi-c}{\sqrt{c_1}}
    \end{equation}
    
    From now on, suppose that $c_1,c_2\in\bR$. For $t_0$ in \eqref{eq:poles} to be purely imaginary, either $c\in\bR$, and $c_1<0$, or $c\in i\bR$, and $c_1>0$. 

    The value of $c$ depends on the initial condition in the following way according to equation \eqref{eq:x'^2 x} 
    
    \begin{equation}
        \sinh(c)=\sqrt\frac{c_1}{c_2x(0)^2}.
    \end{equation}

    The right hand side is either purely imaginary, or real, since by our assumption $c_1,c_2,x(0)\in\bR$. Recall that $\sinh(c)=\sinh(a)\cos(b)+i\cosh(a)\sin(b)$, where $c=a+ib\in\bC$. It is easy to see from this that 

    \begin{equation}
        \begin{split}
            \sinh(c)\in i\bR,\  |\sinh(c)|\geq1&\iff b=\frac{\pi}{2}+k\pi,\text{ where } k\in\bZ,\\
            \sinh(c)\in i\bR,\  |\sinh(c)|\leq 1&\iff a=0 \\
            \sinh(c)\in \bR\setminus\{0\},&\iff b=k\pi,\text{ where } k\in\bZ,\text{ and } a\neq 0
        \end{split}
    \end{equation}

    If $c_1,c_2>0$, then $b=k\pi$, $a\neq 0$. Hence $t_0$ takes no purely imaginary values.

    If $c_1>0$, $c_2<0$, then $c\in i\bR$ if $|\sqrt\frac{c_1}{c_2x(0)^2}|\leq 1$, which holds for $x(0)^2\geq |\frac{c_1}{c_2}|$. Therefore $x(t)$ will have an imaginary pole if $x(0)$ is big enough.

    If $c_1<0$, $c_2>0$, then $c\not\in\bR$, therefore Equation \eqref{eq:poles} takes no purely imaginary values.

    If $c_1,c_2<0$ then $c\in \bR+ki\pi$, where $k\in\mathbb Z$, therefore Equation \eqref{eq:poles} has purely imaginary solutions.\end{proof}

    The following two technical Propositions were proven in \cite{FSz26}, but since we use them extensively in this paper as well, we present them here.

\begin{prop}\label{P:t^2+a}  
Let  $A,B\in\bR$. Consider the following vector fields on $\bR$.
\begin{enumerate}
    \item[a)] $\mathcal{W}(y):=(Ay^2+B)\partial_y$  is  $i\bR$-complete  iff   $A=0$ or $\frac{B}{A}\ge0$. When  $\frac{B}{A}<0$, $\mathcal W$ is  $i\bR$-complete at every point except at $p=0$.
    \item[b)] $\mathcal{W}(y):=(Ay^2+By)\partial_y$ is $i\bR$-complete  iff $A=0$.When $A\not=0$, $\mathcal W$ is 
 $i\bR$-complete at every pont  except  at 
 $p=-\frac{B}{2A}\in \bR$.
\end{enumerate}\cite[Proposition 12.5.]{FSz26}
\end{prop}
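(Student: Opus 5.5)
Both parts are one-dimensional autonomous ODEs, so I will solve them explicitly and then locate the complex-time singularities of each trajectory. Concretely, for an initial point $p\in\bR$ I solve $\dot y=\mathcal W(y)$, $y(0)=p$, in closed form as a meromorphic function of $t\in\bC$. A trajectory extends holomorphically to a neighbourhood of $i\bR$ exactly when this meromorphic function has no pole on $i\bR$. Since the explicit solutions are meromorphic on all of $\bC$ (rational, $\tan$, $\tanh$, or exponential-rational), holomorphic continuation along $i\bR$ can fail only at poles. So the whole proof reduces to locating poles.

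For a), the case $A=0$ gives $y=p+Bt$, which is entire. For $A\neq0$, write $B/A=c$. If $c=0$, the solution is $y=p/(1-Apt)$, whose pole $t=1/(Ap)$ is real (or absent when $p=0$). So we get $i\bR$-completeness, consistent with $B/A\ge0$.

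If $c>0$, set $y=\sqrt c\,u$, which gives $\dot u=A\sqrt c\,(u^2+1)$ and hence $u=\tan(A\sqrt c\,t+\theta_0)$ with $\theta_0=\arctan(p/\sqrt c)\in(-\pi/2,\pi/2)$ real. The poles lie where $A\sqrt c\,t+\theta_0\in\pi/2+\pi\bZ$. These are all real $t$, so none is on $i\bR$.

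If $c<0$, write $c=-k^2$. Then $\dot y=A(y-k)(y+k)$, and the solution is a Möbius expression in $e^{2Akt}$:
$$\frac{y-k}{y+k}=\frac{p-k}{p+k}\,e^{2Akt}.$$
A pole occurs when $e^{2Akt}=(p+k)/(p-k)$. For real $p\neq\pm k$ the right side is real and nonzero. If it is positive, the solutions $t$ have real parts $\frac{1}{2Ak}\log\frac{p+k}{p-k}\neq0$ (as $p\ne0$) and are therefore off $i\bR$. If it is negative (i.e.\ $|p|<k$), the solutions are $t=\frac{1}{2Ak}\big(\log\left|\tfrac{p+k}{p-k}\right|+i\pi(2m+1)\big)$, which lie on $i\bR$ iff $|p+k|=|p-k|$, i.e.\ $p=0$. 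The points $p=\pm k$ are equilibria. This gives exactly the claimed exceptional point $p=0$.

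For b), $A=0$ gives an exponential, which is entire. For $A\neq0$, the substitution $z=y+\frac{B}{2A}$ turns the field into $A z^2-\frac{B^2}{4A}$. This is case a) with ratio $-B^2/(4A^2)$: either $\le0$ with equality only when $B=0$, handled by the $c=0$ computation, or $<0$, where the exceptional point is $z=0$, i.e.\ $p=-B/(2A)$.

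Two points need care. First, when $B=0$ in b) the point $p=0$ is an equilibrium, so completeness would hold there; I would treat this degenerate case separately and check the statement's convention. Second, in a) with $c<0$ I must verify that the pole really sits on $i\bR$ when $p=0$: then $\frac{p+k}{p-k}=-1$, so $t=i\pi(2m+1)/(2Ak)$, which is purely imaginary. This confirms non-completeness at $p=0$. The main obstacle is this bookkeeping of the complex logarithm's branches in the $c<0$ case; I would handle it by computing all preimages of the exponential explicitly as above.
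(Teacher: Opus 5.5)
Your proof is correct, and it is the natural route. The paper does not reprove this proposition; it is quoted from \cite{FSz26}. Solving explicitly and then locating the complex-time poles relative to $i\bR$ is the same technique the paper uses for Theorem \ref{thm:x'^2}.

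Your first ``point of care'' should be settled, not deferred to a convention. For $A\neq0=B$ the trajectories $p/(1-Apt)$ have only the real pole $1/(Ap)$, and $p=0$ is an equilibrium. So $Ay^2\partial_y$ is $i\bR$-complete, which is exactly part a) with $B/A=0\ge0$. Part b) as written is therefore false in that case and contradicts part a). What your argument proves, and what is true, is that $(Ay^2+By)\partial_y$ is $i\bR$-complete iff $AB=0$, with $p=-B/(2A)$ the unique bad initial point when $AB\neq0$. This is also why the example in Corollary \ref{C:D0} needs $c_1\neq0$ and an initial point with $x_0\neq0$.

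When you write it up, add the short reason a pole on $i\bR$ really obstructs extension. Any holomorphic extension on a neighbourhood of $i\bR$ must coincide with your meromorphic solution, by the identity theorem applied on the component containing $i\bR$ with the discrete pole set removed. But your solution is unbounded near the pole, so no such extension exists.
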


\begin{prop}\label{P:l-L-Omega} 
Let $n$ be a positive integer.
\begin{enumerate}
    \item[a)] For  an $l\in {(\bR^n)}^*$
 define the  map $\varphi:\bR^n\ra\bR^n$ by
$\varphi(x):=l(x)x$ and let $\mathcal W$ be the corresponding
   vector field on $\bR^n$.
Then $\mathcal W$ is $i\bR$-complete.
More generally we have.
\item[b)] Suppose   $\bR^n=W\oplus V$ is a decomposition  to vector subspaces. Let $A:W\times W\ra W$ and  $B:\bR^n\times\bR^n\ra V$ be  symmetric bilinear maps and $\left.B\right|_{V\times V}\equiv0$. Define $\psi(u):=A(u,u), u\in W$
and $\varphi(x):=A(x_w,x_w)\oplus B(x,x)$.
Denote by  $\mathcal V_w$ the corresponding vector field to $\psi$
 on $W$  
 (resp. of $\varphi$  on $\bR^n$ by $\mathcal W$). Assume that $\mathcal V_w$ is $i\bR$-complete.
 Then $\mathcal V$ is $i\bR$-complete. When $A=0$, 
$\mathcal V$ will be even  $\bC$-complete. 
\end{enumerate}\cite[Proposition 12.7.]{FSz26}
\end{prop}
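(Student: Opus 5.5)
For part a), I would show that every trajectory of $\mathcal W$ stays on the line through its initial point and then solve the resulting scalar equation explicitly. Fix $x_0\in\bR^n$ and look for the solution of $\dot x=l(x)x$, $x(0)=x_0$, in the form $x(t)=f(t)x_0$ with $f$ scalar. Substituting gives $\dot f x_0=l(x_0)f^2x_0$, so it suffices that $\dot f=l(x_0)f^2$ with $f(0)=1$. This gives
\begin{equation*}
f(t)=\frac{1}{1-l(x_0)t},
\end{equation*}
or $f\equiv1$ when $l(x_0)=0$. By uniqueness this is the trajectory, and it extends meromorphically to $\bC$. Its only possible pole is $t=1/l(x_0)\in\bR\setminus\{0\}$, which never lies on $i\bR$. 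Hence every trajectory extends holomorphically to $\bC\setminus\{1/l(x_0)\}$, which is a neighborhood of $i\bR$, so $\mathcal W$ is $i\bR$-complete. (This is also the special case $n=1$, $A(y,y)=l y^2$ of Proposition \ref{P:t^2+a} b) with $B=0$, applied along each ray.)

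For part b), I would decouple the system. Write $x=x_w+x_v$ with $x_w\in W$ and $x_v\in V$. The equation splits as
\begin{equation*}
\dot x_w=A(x_w,x_w),\qquad \dot x_v=B(x_w+x_v,x_w+x_v)=B(x_w,x_w)+2B(x_w,x_v),
\end{equation*}
where the second form uses symmetry of $B$ and $B|_{V\times V}\equiv0$. The first equation does not involve $x_v$, so $x_w(t)$ is a trajectory of $\mathcal V_w$. By hypothesis it extends holomorphically to some open neighborhood $U$ of $i\bR$. Given $x_w$, the second equation is a \emph{linear} inhomogeneous ODE $\dot x_v=L(t)x_v+b(t)$, with $L(t)=2B(x_w(t),\cdot)|_V$ and $b(t)=B(x_w(t),x_w(t))$ holomorphic on $U$. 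A linear ODE with holomorphic coefficients has a solution on any simply connected domain on which the coefficients are holomorphic: continue locally along paths, which never blows up by the linear Gronwall bound, and use monodromy. By uniqueness this solution agrees with the real trajectory near $0$.

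The main obstacle is the topological point that $U$ need not be simply connected. I would handle it by shrinking $U$ to a tube $U'=\{\sigma+i\tau : |\sigma|<r(\tau)\}$ for a continuous positive function $r$. Such a tube exists because $i\bR$ is closed and $U$ is open. It is horizontally star-shaped about $i\bR$, hence contractible. On $U'$ both $x_w$ and $x_v$ are holomorphic, so the trajectory of $\mathcal W$ extends to a neighborhood of $i\bR$, which proves $i\bR$-completeness.

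Finally, when $A=0$ the component $x_w$ is constant. Then $L$ and $b$ are constant and the linear system is solved by matrix exponentials, so it is entire. Hence $\mathcal W$ is $\bC$-complete in that case.
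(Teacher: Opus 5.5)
The paper does not prove this proposition itself; it restates it from \cite[Proposition 12.7]{FSz26} without proof, so there is no in-text argument to compare with. Judged on its own, your proof is correct and complete.

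In a), the ray ansatz $x(t)=x_0/(1-l(x_0)t)$ gives the trajectory explicitly. Its only singularity is at the real point $1/l(x_0)$, so it never meets $i\bR$.

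In b), the condition $B|_{V\times V}\equiv0$ makes the system triangular. Then $x_v$ satisfies a linear ODE whose coefficients are holomorphic wherever the extension of $x_w$ is. Shrinking $U$ to the tube $U'$ is the step that needs care, because the monodromy argument requires a simply connected domain. You handle it correctly. It can be made explicit with $r(\tau)=\min\{1,\mathrm{dist}(i\tau,\bC\setminus U)\}$, which is continuous and positive and gives $U'\subset U$.

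You also correctly read the conclusion ``$\mathcal V$ is $i\bR$-complete'' as a typo for $\mathcal W$.

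One small remark on your parenthetical in a). It is valid only through the pointwise clause of Proposition \ref{P:t^2+a} b), since your initial value $f(0)=1$ differs from the exceptional point $-B/(2A)=0$. The first clause there (``$i\bR$-complete iff $A=0$'') is literally false when $B=0$: the exceptional trajectory is then the constant $0$. Your own part a) with $n=1$ is a counterexample. Your explicit formula is what carries the argument, so nothing depends on that citation.
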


\section{The group E(2)}

Throughout the next two chapters, we will follow the notation and framework of Guediri \cite{G96}, who classified the left-invariant Lorentzian metrics on $E(2)$ and $E(1,1)$. The corresponding left-invariant Riemannian metrics were later classified by Inoguchi and Van der Veken \cite{IV07}. Notably, the first case of Guediri's Lorentzian classification directly encompasses the Riemannian case, up to the signs of the parameters $\lambda_i$. Consequently, these two cases can be treated simultaneously.

Let $E(2)$ denote the group of isometries of the Euclidean plane, which can be represented as

\begin{equation}
    E(2)=\left\{ \begin{pmatrix}
    \cos(t) & -\sin(t) & a \\
    \sin(t) & \cos(t) & b \\
    0 & 0 & 1
\end{pmatrix}\in GL(3,\bR)\Bigg|a,b,t\in\bR\right\}.
\end{equation}

Denote $[\mathfrak{e}(2),\mathfrak{e}(2)]$ by $V$. Let $e_1,e_2$ be a basis of $V$ and $e_0\in\mathfrak{e}(2)\setminus V$ such that $[e_0,e_1]=e_2$ and $[e_0,e_2]=-e_1$.
Let us define a bilinear form $(.,.)$ on $V$ such that

\begin{equation}
    (x,y)=x_1y_1+x_2y_2,
\end{equation}

where $x=x_1e_1+x_2e_2$ and $y=y_1e_1+y_2e_2$.

\begin{theorem}\label{thm:e2 left-invariant}
    Let $q$ be a left-invariant pseudo-Riemannian metric on $E(2)$. Then the Euler-Arnold vector field is $i\bR$-complete iff
    \begin{equation}\label{eq:e2 i-complete metric}
        q=\lambda_0(e_0^*)^2+\lambda((e_1^*)^2+(e_2^*)^2).
    \end{equation}
    In this case the Euler-Arnold vector field is $\bC$-complete.
\end{theorem}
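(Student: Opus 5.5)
We use Guediri's classification of left-invariant metrics on $E(2)$ (Riemannian and Lorentzian together), up to automorphisms of $\mathfrak e(2)$, and treat each normal form separately. Automorphisms of $\mathfrak e(2)$ act linearly on the Euler--Arnold field, so they preserve $i\bR$-completeness. In each case we write $q(x,y)=\langle Sx,y\rangle$ with $S$ symmetric. From Corollary \ref{C:EAequation}, $\mathcal E(x)=ad^*_xx$, i.e. $q(\mathcal E(x),z)=q(x,[x,z])$. Writing $x=x_0e_0+x_1e_1+x_2e_2$, we have
$$[x,e_0]=x_2e_1-x_1e_2,\qquad [x,e_1]=x_0e_2,\qquad [x,e_2]=-x_0e_1.$$
This turns the Euler--Arnold equation into an explicit quadratic system in $(x_0,x_1,x_2)$.

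\textbf{The ``if'' direction.} For $q=\lambda_0(e_0^*)^2+\lambda((e_1^*)^2+(e_2^*)^2)$, we get
$$q(\mathcal E,e_1)=-\lambda x_0x_2,\qquad q(\mathcal E,e_2)=\lambda x_0x_1,\qquad q(\mathcal E,e_0)=\lambda(x_1x_2-x_2x_1)=0.$$
Hence $\dot x_0=0$, and $(\dot x_1,\dot x_2)=x_0(-x_2,x_1)$ is linear with constant coefficient $x_0$. The solutions $x_0$ constant and $(x_1,x_2)$ a rotation by $e^{\pm i x_0 t}$-type exponentials are entire, so the field is $\bC$-complete, in particular $i\bR$-complete. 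This is also an instance of the $A=0$ case of Proposition \ref{P:l-L-Omega}(b), with $W=\mathrm{span}(e_0)$.

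\textbf{The ``only if'' direction.} For every other normal form we exhibit a real initial condition whose solution has a pole on $i\bR$.

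For the diagonal forms $\lambda_0(e_0^*)^2+\lambda_1(e_1^*)^2+\lambda_2(e_2^*)^2$ with $\lambda_1\ne\lambda_2$, the equations become
$$\lambda_0\dot x_0=(\lambda_1-\lambda_2)x_1x_2,\qquad \lambda_1\dot x_1=-\lambda_2x_0x_2,\qquad \lambda_2\dot x_2=\lambda_1x_0x_1.$$
We use the conserved quantities: the energy $q(x,x)$ and the Casimir-type quantity $\lambda_1^2x_1^2+\lambda_2^2x_2^2$. These let us eliminate $x_1$ and $x_2$ and obtain an equation for $x_0$ of the form $\dot x_0^2=c_0+c_1x_0^2+c_2x_0^4$. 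For a suitable initial level (for instance $c_0=0$, i.e. choosing the conserved quantities so that the constant term vanishes), this reduces to equation \eqref{eq: x'^2}. We then apply Theorem \ref{thm:x'^2}, and the sign analysis there yields an imaginary pole. In the subcases where $c_1$ or $c_2$ degenerates, we instead use Proposition \ref{P:t^2+a}, restricting to an invariant line such as $x_2=0$ or $x_1=0$, where the system reduces to a scalar Riccati equation.

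For the non-diagonal Lorentzian normal forms in Guediri's list (off-diagonal terms $e_0^*e_i^*$, or a null-type block on $V$), we look for an invariant line or plane. On it the flow becomes $(Ay^2+By)\partial_y$ with $A\neq0$, so Proposition \ref{P:t^2+a}(b) gives a non-$i\bR$-complete point.

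\textbf{Expected obstacle.} The main difficulty is the diagonal case $\lambda_1\neq\lambda_2$. There we must choose initial data making $c_1,c_2,c_3$ real with the right signs so that the relevant case of Theorem \ref{thm:x'^2} (for example $c_1>0>c_2$ with $x(0)$ large) applies, and the choice must work for every sign pattern of $\lambda_0,\lambda_1,\lambda_2$. If $c_0=0$ is not attainable for some sign pattern, the fallback is an invariant coordinate subspace on which the flow is Riccati.
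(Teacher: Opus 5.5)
\textbf{Gaps in the ``only if'' direction.} The ``if'' direction is fine. In the diagonal case your route (two first integrals, a quartic equation for $x_0$, kill the constant term, apply Theorem \ref{thm:x'^2}) is the paper's. The problems are in how you finish the ``only if'' direction.

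\textbf{A sign slip.} Since $q(\mathcal E(x),e_1)=x_0\,q(x,e_2)$ and $q(\mathcal E(x),e_2)=-x_0\,q(x,e_1)$, the correct equations are $\lambda_1\dot x_1=\lambda_2x_0x_2$ and $\lambda_2\dot x_2=-\lambda_1x_0x_1$. With your signs and your (correct) $\dot x_0$ equation, $q(x,x)$ is not conserved when $\lambda_1\neq\lambda_2$. So the energy you intend to use is not a first integral of the system you wrote. In the ``if'' direction the slip only reverses the rotation and does no harm.

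\textbf{The diagonal case is left open.} The decisive step, real initial data with $c_0=0$ that actually produce a pole on $i\bR$, is not carried out. Your fallback does not exist: for $\lambda_1\neq\lambda_2$ the coordinate axes consist of equilibria and no coordinate plane is invariant. Here is what closes it. With $e=q(x,x)$ and $m=\lambda_1^2x_1^2+\lambda_2^2x_2^2$ one has $\lambda_2e-m=\lambda_0\lambda_2x_0^2-\lambda_1(\lambda_1-\lambda_2)x_1^2$ and $\lambda_1e-m=\lambda_0\lambda_1x_0^2-\lambda_2(\lambda_2-\lambda_1)x_2^2$. The product of the ratios $\frac{\lambda_0\lambda_2}{\lambda_1(\lambda_1-\lambda_2)}$ and $\frac{\lambda_0\lambda_1}{\lambda_2(\lambda_2-\lambda_1)}$ is $-\lambda_0^2/(\lambda_1-\lambda_2)^2<0$. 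Hence for every sign pattern exactly one of the sets $\{\lambda_je=m\}$ is a pair of real planes, so $c_0=0$ is always attainable.

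On that set, $x_0(0)=0$ puts you on an equilibrium (for instance $x_0(0)=x_1(0)=0$), which proves nothing. Instead, if the first ratio is positive, take $x_0(0)=s\neq0$, $x_1(0)^2=\frac{\lambda_0\lambda_2}{\lambda_1(\lambda_1-\lambda_2)}s^2$, $x_2(0)=0$. Then $c_2=-1$, and $\dot x_0(0)=0$ forces $c_1=s^2$. Since $\dot x_2(0)\neq0$, $x_0$ is not constant, so $x_0(t)=s/\cosh(st)$, which has a pole at $t=i\pi/(2s)\in i\bR$. Note that ``$x(0)$ large'' in the case $c_1>0>c_2$ can only mean this turning point, because real solutions satisfy $x_0^2\le c_1/|c_2|$.

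\textbf{The non-diagonal case: your mechanism fails.} When $q|_V$ is degenerate, automorphisms of $\mathfrak e(2)$ (rotations of $V$ and $e_0\mapsto e_0+v$) bring $q$ to $\mu(e_2^*)^2+2\nu e_0^*e_1^*$ with $\mu\nu\neq0$. The Euler--Arnold system is then $\dot x_0=\frac{\mu}{\nu}x_0x_2$, $\dot x_1=x_0x_2-\frac{\mu}{\nu}x_1x_2$, $\dot x_2=-\frac{\nu}{\mu}x_0^2$.
\begin{itemize}
\item Every invariant line is either a line of equilibria or one of the lines $\{x_0=0,\ x_2=c\}$, on which $\dot x_1=-\frac{\mu}{\nu}c\,x_1$ is linear.
\item The only invariant planes are $\{x_0=0\}$, where all solutions are exponentials, and one further plane on which the flow is still the two-dimensional $(x_0,x_2)$-system.
\end{itemize}
So no restriction to an invariant line or plane yields $(Ay^2+By)\partial_y$ with $A\neq0$, and Proposition \ref{P:t^2+a}(b) never applies.

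The missing ingredient, which the paper uses, is a first integral of the closed $(x_0,x_2)$-subsystem. The quantity $m=\nu^2x_0^2+\mu^2x_2^2$ is conserved, hence $\dot x_2=\frac{\mu}{\nu}x_2^2-\frac{m}{\mu\nu}$. This is a Riccati equation with $B/A=-m/\mu^2<0$, so Proposition \ref{P:t^2+a}(a) shows that the trajectory with $x_2(0)=0$, $x_0(0)\neq0$ is not $i\bR$-complete. In both cases the scalar equation lives on a level set of a first integral, not on an invariant line.
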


\begin{proof}

According to \cite{G96} a pseudo-Riemannian metric $q$ on $E(2)$ is always complete in the real sense.

\textit{Case 1.} $q|_V$ is definite. Then for a trajectory $x(t)=x_0(t)e_0+x_1(t)e_1+x_2(t)e_2$, the Euler-Arnold equations are 

\begin{equation}\begin{split}\label{eq:e2 nondeg EA}
    \dot{x}_0(t)&=\frac{\lambda_2-\lambda_1}{\lambda_0}x_1(t)x_2(t) \\
    \dot{x}_1(t)&=-\frac{\lambda_2}{\lambda_1}x_0(t)x_2(t) \\
    \dot{x}_2(t)&=\frac{\lambda_1}{\lambda_2}x_0(t)x_1(t).
\end{split}\end{equation}

It is easy to see that if $\lambda_1=\lambda_2$ then the trajectories are $\bC$-complete. Since then $x_0(t)$ remains constant and $x_1(t)$ and $x_2(t)$ form a harmonic oscillator.

In the following, we will suppose that $\lambda_1\neq\lambda_2$. To disprove $i\bR$-completeness, we will construct a trajectory which is not $i\bR$-complete using first integrals and carefully choosing the initial values.

This system of equations has 2 first integrals:

\begin{equation}\begin{split}\label{eq:e2 nondeg first integrals}
    \lambda_0x_0^2(t)+\lambda_1x_1^2(t)+\lambda_2x_2^2(t)=&e \\
    \lambda_1^2 x_1^2(t)+\lambda_2^2 x_2^2(t)=&m\ 
\end{split}\end{equation}

Since $\lambda_1,\lambda_2, \lambda_0\neq 0$, we can express $x_2(t)$ and $x_1(t)$ from $x_0(t)$ in the following way:

\begin{equation}\begin{split}\label{eq:e2 nondeg x_2 x_0}
    \lambda_1e-m&=\lambda_1\lambda_0x_0^2(t)+(\lambda_1\lambda_2-\lambda_2^2)x_2^2(t)\\
    x_2^2(t)&=\frac{\lambda_1e-m-\lambda_1\lambda_0x_0^2(t)}{\lambda_2(\lambda_1-\lambda_2)}
\end{split}\end{equation}

and

\begin{equation}\begin{split}\label{eq:e2 nondeg x_1 x_0}
    \lambda_2e-m&=\lambda_2\lambda_0x_0^2(t)+(\lambda_2\lambda_1-\lambda_1^2)x_1^2(t)\\
    x_1^2(t)&=\frac{\lambda_2e-m-\lambda_2\lambda_0x_0^2(t)}{\lambda_1(\lambda_2-\lambda_1)}.
\end{split}\end{equation}

Now, to proceed, let's calculate $\dot{x}_0^2(t)$ by substituting equation \eqref{eq:e2 nondeg x_1 x_0} and \eqref{eq:e2 nondeg x_2 x_0} into \eqref{eq:e2 nondeg EA} to yield

\begin{equation}\begin{split}
    \dot{x}_0^2(t)&=\left( \frac{\lambda_2-\lambda_1}{\lambda_0}\right)^2\frac{\lambda_2e-m-\lambda_2\lambda_0x_0^2(t)}{\lambda_1(\lambda_2-\lambda_1)}\frac{\lambda_1e-m-\lambda_1\lambda_0x_0^2(t)}{\lambda_2(\lambda_1-\lambda_2)}=\\
    &=-x_0^4(t)+\frac{(\lambda_2+\lambda_1)e-2m}{\lambda_0}x_0^2(t)-\frac{(\lambda_1e-m)(\lambda_2e-m)}{\lambda_0^2\lambda_1\lambda_2}.
\end{split}\end{equation}

If $(\lambda_1e-m)(\lambda_2e-m)=0$, then we can use Theorem \ref{thm:x'^2} and conclude that a trajectory belonging to the given initial condition is not $i\bR$-complete.

We need to prove that there exist initial conditions such that $\lambda_1e=m$ or $\lambda_2e=m$.

With the choice of

\begin{equation}
    x_0(0)=x_{j}(0)=0,\quad x_{3-j}(0)\neq 0, \quad \text{for } j=1,2
\end{equation}

$(\lambda_1e-m)(\lambda_2e-m)=0$ is satisfied so the corresponding trajectory, and geodesic, is not $i\bR$-complete.

\textit{Case 2.} If $q|_V$ is indefinite, we can form a basis, for which the Euler-Arnold equations are:

\begin{equation}\begin{split}
    \dot{x}_0(t)&=x_0(t)x_2(t) \\
    \dot{x}_1(t)&=-(x_0(t)+x_1(t))x_2(t) \\
    \dot{x}_2(t)&=-x_0^2(t),
\end{split}\end{equation}

where $x(t)=x_0(t)e_0+x_1(t)e_1+x_2(t)e_2$. 

In this case the two first integrals are:

\begin{equation}\begin{split}
    x_2^2(t)-2x_0(t)x_1(t)&=e \\
    x_0^2(t)+x_2^2(t)&=m
\end{split}\end{equation}

From these two expressions, we can easily deduce that

\begin{equation}
    \dot{x}_2(t)=x_2^2(t)-m.
\end{equation}

Thus $x_2(t)$ is not $i\bR$-complete when $m>0$ and $x_2(0)=0$ according to Proposition \ref{P:t^2+a}. The trajectory belonging to an initial condition choice of $x_0(0)\neq 0$ and $x_2(0)=0$ is not $i\bR$-complete, and therefore neither is this family of pseudo-Riemannian metrics.\end{proof}

\begin{theorem}
    From Theorem \ref{T:nabla-i-complete} and Theorem \ref{thm:e2 left-invariant} it follows that metrics of type \eqref{eq:e2 i-complete metric} are $\mathcal{P}$-complete.
\end{theorem}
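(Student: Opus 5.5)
The argument is a direct application of Theorem \ref{T:nabla-i-complete} to $G=E(2)$ with $\nabla$ the Levi-Civita connection of a metric $q$ of the form \eqref{eq:e2 i-complete metric}. Theorem \ref{T:nabla-i-complete} has three hypotheses: $G$ is a connected real Lie group, $\nabla$ is left-invariant, and its Euler--Arnold vector field is $i\bR$-complete in $\mathfrak e(2)\otimes\bC$. I check each in turn and then read off $\mathcal P$-completeness from the definition.

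First, $E(2)$ as parametrized by $(a,b,t)\in\bR^3$ is the continuous image of $\bR^3$, so it is connected. Second, since $q$ is left-invariant, its Levi-Civita connection is left-invariant and is given by \eqref{E:LeviCivita}. By Corollary \ref{C:EAequation}, its Euler--Arnold field is $\mathcal E(x)=ad^*_xx$.

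Third, for $q$ as in \eqref{eq:e2 i-complete metric}, i.e.\ $\lambda_1=\lambda_2=\lambda$ in Case 1 of Theorem \ref{thm:e2 left-invariant}, equations \eqref{eq:e2 nondeg EA} reduce to
\[
\dot x_0=0,\qquad \dot x_1=-x_0x_2,\qquad \dot x_2=x_0x_1 .
\]
So $x_0\equiv c$ is constant and $(x_1,x_2)$ solves a linear system with constant coefficients. Its solutions are $x_1+ix_2=(x_1(0)+ix_2(0))e^{ict}$, and likewise for $x_1-ix_2$, and these remain valid for complex initial data in $\mathfrak e(2)\otimes\bC$. Every trajectory therefore extends to an entire map $\bC\to\bC^3$. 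Hence $\mathcal E$ is $\bC$-complete, and in particular $i\bR$-complete in the sense of Definition \ref{D:iR complete}.

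Applying Theorem \ref{T:nabla-i-complete} then gives $N_i(\eta,E(2)_\bC)=TE(2)$ and shows that $P=(\psi_\eta)^{-1}_*(T^{1,0}E(2)_\bC)$ is an $E(2)$-invariant ac-polarization on all of $TE(2)$. Since $\nabla$ is the Levi-Civita connection of $q$, the metric $q$ is $\mathcal P$-complete by definition. The only point needing care is that the completeness required is for the complexified field on $\mathfrak g\otimes\bC$, not just along real trajectories. The explicit exponential solutions above settle this because they are entire in both $t$ and the initial data, so I expect no real obstacle.
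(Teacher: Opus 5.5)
Your proposal is correct and follows essentially the same route as the paper. The paper obtains the statement by citing Theorem \ref{thm:e2 left-invariant}, whose proof notes that for $\lambda_1=\lambda_2$ the component $x_0$ is constant and $(x_1,x_2)$ is a harmonic oscillator, so the flow is $\bC$-complete; combining this with Theorem \ref{T:nabla-i-complete} gives $\mathcal P$-completeness. You do the same, except that you re-derive the relevant direction of Theorem \ref{thm:e2 left-invariant} explicitly instead of citing it.
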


Standard computation shows that the covariant derivative of the Levi-Civita connection of these metrics vanishes, hence these spaces are locally symmetric. Therefore, because of \cite{Sz05} they are also $\mathcal P$-complete. The above theorem asserts that only the locally symmetric metrics are $\mathcal P$-complete on $E(2)$ and gives a new proof to the $P$-completeness of these particular metrics.

\section{The group E(1,1)}

Let $E(1,1)$ denote the group of isometries of the Minkowski plane, which has the following faithful representation.

\begin{equation}
    E(1,1)=\left\{ \begin{pmatrix}
    ch(t) & sh(t) & a \\
    sh(t) & ch(t) & b \\
    0 & 0 & 1
\end{pmatrix}\in GL(3,\bR)\Bigg|a,b,t\in\bR\right\}.
\end{equation}

Denote $[\mathfrak{e}(1,1),\mathfrak{e}(1,1)]$ by $V$. Let $e_1,e_2$ be a basis of $V$ and $e_0\in\mathfrak{e}(2)\setminus V$ such that $[e_0,e_1]=e_2$ and $[e_0,e_2]=e_1$. Let us define an indefinite bilinear form $(.,.)$ on $V$ such that

\begin{equation}
    (x,y)=x_1y_1-x_2y_2,
\end{equation}

where $x=x_1e_1+x_2e_2$ and $y=y_1e_1+y_2e_2$.

\begin{theorem}\label{thm:e11 left-invariant}
     Let $q$ be a left-invariant pseudo-Riemannian metric on $E(1,1)$. The Euler-Arnold vector field is $i\bR$-complete iff it belongs to one of the following:
     \begin{equation}\begin{split}\label{eq:e11 i-complete metric}
         &\lambda_0(e^*_0)^2+\lambda((e^*_1)^2-(e_2^*)^2) \\
         &\lambda_0(e_0^*)^2+(a/2+\lambda)(e_1^*)^2+(a/2-\lambda)(e_2^*)^2-a(e_1^*)(e_2^*),\text{ where }\frac{a}{\lambda_0}\geq 0\\
         &e_0^*e_1^*+\lambda (e_2^*)^2
     \end{split}\end{equation}
\end{theorem}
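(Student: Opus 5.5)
Following the proof of Theorem \ref{thm:e2 left-invariant}, I will go through Guediri's classification \cite{G96} of left-invariant pseudo-Riemannian metrics $q$ on $E(1,1)$ case by case. The cases are organized by the type of the restriction $q|_V$ to the commutator ideal $V=\mathrm{span}(e_1,e_2)$: definite, indefinite, or degenerate. In each normal form I will write the Euler--Arnold equations $\dot x=ad^*_xx$ of Corollary \ref{C:EAequation} in coordinates $x=x_0e_0+x_1e_1+x_2e_2$, using $[e_0,e_1]=e_2$ and $[e_0,e_2]=e_1$. Each such system has two first integrals. One is the energy $q(x,x)=e$. The other comes from the coadjoint orbit, essentially the restriction of the Lorentz form to $V^*$ evaluated on the image of $x$ under $q$. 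I will use these to reduce each system to a one-variable ODE, either a Riccati equation $\dot y=Ay^2+B$ (or $Ay^2+By$), or a quartic equation $\dot y^2=P(y)$.

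\textbf{Diagonal case.} For $q=\lambda_0(e_0^*)^2+\lambda_1(e_1^*)^2+\lambda_2(e_2^*)^2$, the equations take the same shape as \eqref{eq:e2 nondeg EA}, with signs changed by the hyperbolic bracket. If $\lambda_2=-\lambda_1$, then $x_0$ is constant and $(x_1,x_2)$ solves a linear system with constant coefficients, so the field is $\bC$-complete. This gives the first family in \eqref{eq:e11 i-complete metric}. Otherwise I will eliminate $x_1,x_2$ as in the $E(2)$ proof and obtain
\[
\dot x_0^2=-x_0^4+\beta x_0^2+\gamma.
\]
I will then choose initial data $x_0(0)=0$ and $x_j(0)=0$, which forces $\gamma=0$. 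By Theorem \ref{thm:x'^2}, with $c_2=-1<0$, this trajectory is not $i\bR$-complete.

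\textbf{Non-diagonal cases.} For the non-diagonalizable forms with $q|_V$ nondegenerate, the natural coordinates are the null coordinates $u=x_1+x_2$, $w=x_1-x_2$ on $V$, in which $ad_{e_0}$ is diagonal. The second family in \eqref{eq:e11 i-complete metric} is diagonal plus a null term $a(e_1^*-e_2^*)^2/2$. I expect the system to become triangular there: one null coordinate decouples, leaving a Riccati equation of the form $\dot x_0=c\,x_0^2+(\text{first integral})$ with $c\propto a/\lambda_0$ for $x_0$. Proposition \ref{P:t^2+a}a) then gives $i\bR$-completeness exactly when the sign condition $a/\lambda_0\ge0$ holds, once I check that the constant term ranges over values of the appropriate sign. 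The remaining variables follow by linear ODEs with the coefficients already found, or by Proposition \ref{P:l-L-Omega}b). The degenerate case $q|_V$ degenerate, normalized to $e_0^*e_1^*+\lambda(e_2^*)^2$ (third family), should have the triangular structure of Proposition \ref{P:l-L-Omega}b) directly, which gives completeness. The other degenerate normal forms, and the indefinite non-diagonal ones outside the listed families, will be handled as in Case 2 of the $E(2)$ proof. There I expect an equation like $\dot x_2=x_2^2-m$ with $m>0$ attainable, and Proposition \ref{P:t^2+a}a) then excludes them.

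\textbf{Main obstacle.} The hard step is the second family. I need the exact sign bookkeeping showing that the Riccati constant $B/A$ is $\ge0$ for every real initial condition precisely when $a/\lambda_0\ge0$, together with an explicit initial condition violating this otherwise. I would first work in null coordinates, compute the two first integrals explicitly, and substitute them. If the reduction instead produces a quartic equation, I would fall back on Theorem \ref{thm:x'^2} with the realness condition on $c_1,c_2,c_3$ checked.
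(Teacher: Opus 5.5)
\textbf{Your diagonal case fails.} With the hyperbolic bracket the reduced quartic has the opposite leading sign from the $E(2)$ case. In your parametrization $q=\lambda_0(e_0^*)^2+\lambda_1(e_1^*)^2+\lambda_2(e_2^*)^2$ the system is $\dot x_0=-\frac{\lambda_1+\lambda_2}{\lambda_0}x_1x_2$, $\dot x_1=\frac{\lambda_2}{\lambda_1}x_0x_2$, $\dot x_2=\frac{\lambda_1}{\lambda_2}x_0x_1$. Put $\rho=\frac{\lambda_1+\lambda_2}{\lambda_0\lambda_1\lambda_2}$. Then $\alpha_j=x_0^2+\rho\lambda_j^2x_j^2$ ($j=1,2$) are first integrals, and $\dot x_0^2=(x_0^2-\alpha_1)(x_0^2-\alpha_2)$. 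The $x_0^4$-coefficient is therefore $+1$, so killing the constant term puts you in the case $c_2>0$ of Theorem \ref{thm:x'^2}, which is the $i\bR$-complete one. Worse, the point $x_0=x_j=0$ is a zero of the Euler--Arnold field, so your trajectory is constant.

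The paper instead starts on the invariant null line $\lambda_1x_1=\pm\lambda_2x_2$ (its choice $x_1(0)=\lambda_2$, $x_2(0)=\lambda_1$, i.e.\ $m=0$). There $\alpha_1=\alpha_2=\alpha$ and the quartic is a perfect square, which gives the Riccati equation $\dot x_0=\pm(\alpha-x_0^2)$. With $x_0(0)=0$, Proposition \ref{P:t^2+a} gives a pole on $i\bR$ exactly when $\alpha=\rho\lambda_1^2\lambda_2^2>0$, i.e.\ when $\rho>0$.

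\textbf{For $\rho<0$ there is no counterexample, and the ``only if'' direction is false as stated.} In this case $x_0^2-\alpha_j=-\rho\lambda_j^2x_j^2\ge0$, so real motion stays in $x_0^2\ge\max(\alpha_1,\alpha_2)$. Consequently $x_0(t)=\pm a f(b(t-t_*),k)$ with $a,b,t_*$ real and $f\in\{\mathrm{ns},\mathrm{nc},\mathrm{sc}\}$, or $x_0$ is one of their trigonometric or hyperbolic degenerations. In every case the complex poles lie only on the vertical lines through the real poles. Hence $x_0$ is holomorphic on a strip around $i\bR$, and so are $\lambda_1x_1\pm\lambda_2x_2=c_\pm e^{\pm\int_0^t x_0}$.

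For instance, take $q=-(e_0^*)^2+(e_1^*)^2+2(e_2^*)^2$. The trajectory through $(0,1,1)$ has $x_0=a\,\mathrm{sc}(bt,k)$ with $a^2=\frac32$, $b^2=6$, $k'=\frac12$, and $x_0(it)=ia\,\mathrm{sn}(bt,k')$ is bounded. Such metrics are $i\bR$-complete, yet $q|_V$ is definite, so they are not on the list. The paper's own fallback for this sign uses $x_0(0)=x_1(0)=0$, which is again a stationary point.

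\textbf{Your plan also omits the case where $\phi$ has complex eigenvalues.} The mechanism you suggest, $\dot x_2=x_2^2-m$, is not what happens there. Set $\pi_\pm=q(x_v,e_1\pm e_2)$. Then $\dot\pi_\pm=\pm x_0\pi_\pm$ in every nondegenerate case. However, $\dot x_0\propto\pi_+^2-\pi_-^2$ in the diagonal case, while $\dot x_0\propto\pi_+^2+\pi_-^2$ in the complex case. As a result, the Riccati equations on the two invariant lines $\pi_-=0$ and $\pi_+=0$ produce a pole on $i\bR$ for opposite signs of $\beta/\lambda_0$. The paper's two choices $x_1(0)=\alpha\pm\beta$, $x_2(0)=\beta\mp\alpha$ exploit exactly this. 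In the diagonal case, by contrast, both lines require $\rho>0$.
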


It is important to mention that, as a long but interesting calculation shows, among the above mentioned families of $\mathcal P$-complete metrics, the first is locally symmetric, but the others are not. Therefore, because of \cite{Sz05} they were already known to be $\mathcal P$-complete. However, not only do we present a new proof for the $\mathcal P$-completeness of these metrics, but we also present non-locally-symmetric $\mathcal P$-complete metrics.

\begin{proof}
    Similarly to the case of the previous group, \cite{G96} identifies several cases depending on the type of $q$. 

    If $q|_V$ is nondegenerate, then there exists a $\phi\in End(V)$ such that 

\begin{equation}
    q(x,y)=(\phi(x),y),
\end{equation}

for $x,y\in V$.

\textit{Case 1}. The matrix of $\phi$ is diagonalizable with eigenvalues $\lambda_1,\lambda_2$. There exists a basis $\{e_1,e_2,e_3\}$, such that for a trajectory $x(t)=x_1(t)e_1+x_2(t)e_2+x_3(t)e_3$, the Euler-Arnold equations are

\begin{equation}\begin{split}\label{eq: diag EA}
    \dot{x}_0(t)&=\frac{\lambda_2-\lambda_1}{\lambda_0}x_1(t)x_2(t) \\
    \dot{x}_1(t)&=-\frac{\lambda_2}{\lambda_1}x_0(t)x_2(t) \\
    \dot{x}_2(t)&=-\frac{\lambda_1}{\lambda_2}x_0(t)x_1(t).
\end{split}\end{equation}

Now, if $\lambda_1=\lambda_2$, then $x_0(t)$ is constant, and the metric is holomorphically complete. Otherwise we have first integrals

\begin{equation}\begin{split}\label{eq: diag first integrals}
    e&=\lambda_0x_0^2(t)+\lambda_1x_1^2(t)-\lambda_2x_2^2(t) \\
    m&=\lambda_1^2x_1^2(t)-\lambda_2^2x_2^2(t),
\end{split}\end{equation}

where $\lambda_0=q(e_0,e_0)$. Our method of proof is similar to the case of $E(2)$ we show that for certain initial conditions the corresponding trajectories are not $i\bR$-complete.

From the two first integrals, one can express $x_1(t)$ and $x_2(t)$ as functions in terms of each other such that

\begin{equation}\label{eq: diag x1 x2}
    x_1^2(t)=\frac{m+\lambda_2^2x_2^2(t)}{\lambda_1^2},\quad \text{ and}\quad x_2^2(t)=\frac{-m+\lambda_1^2x_1^2(t)}{\lambda_2^2}.
\end{equation}

Therefore by combining equation \eqref{eq: diag first integrals} and \eqref{eq: diag x1 x2} we get that

\begin{equation}
    e=\lambda_0x_0^2(t)+\frac{m}{\lambda_1}+\frac{\lambda_2(\lambda_2-\lambda_1)}{\lambda_1}x_2^2(t)=
\lambda_0x_0^2(t)+\frac{m}{\lambda_2}+\frac{\lambda_1(\lambda_2-\lambda_1)}{\lambda_2}x_1^2(t).
\end{equation}

By expressing $x_1^2(t)$ and $x_2^2(t)$ we obtain

\begin{equation}\label{eq: diag xi x0}
\begin{split}
    x_2^2(t)&=\left(e-\frac{m}{\lambda_1}-\lambda_0x_0^2(t)\right)\frac{\lambda_1}{\lambda_2(\lambda_2-\lambda_1)},\\
    &\text{and} \\
    x_1^2(t)&=\left(e-\frac{m}{\lambda_2}-\lambda_0x_0^2(t)\right)\frac{\lambda_2}{\lambda_1(\lambda_2-\lambda_1)}.
\end{split}    
\end{equation}

Combining equation \eqref{eq: diag xi x0} and \eqref{eq: diag EA} we get the following ODE for $x_0(t)$:

\begin{equation}
    \dot{x}_0^2(t)=\left(\frac{e}{\lambda_0}-\frac{m}{\lambda_0\lambda_1}-x_0^2(t)\right)\left(\frac{e}{\lambda_0}-\frac{m}{\lambda_0\lambda_2}-x_0^2(t)\right)
\end{equation}

With the choice of $x_1(0)=\lambda_2$ and $x_2(0)=\lambda_1$, $m=0$ using the continuity of $\dot x_0$ the either

\begin{equation}
    \dot{x}_0(t)=\frac{e}{\lambda_0}-x_0^2(t)\quad\text{or}\quad\dot{x}_0(t)=-\left(\frac{e}{\lambda_0}-x_0^2(t)\right)
\end{equation}

If $\frac{e}{\lambda_0}>0 $ and $x_0(0)=0$, then the corresponding trajectory is not $i\bR$-complete. With these choices of initial conditions, $\frac{e}{\lambda_0}$ becomes

\begin{equation}
    \frac{e}{\lambda_0}=x_0^2(0)+\frac{\lambda_1}{\lambda_0}\lambda_2^2-\frac{\lambda_2}{\lambda_0}\lambda_1^2=\frac{\lambda_1\lambda_2}{\lambda_0}(\lambda_2-\lambda_1).
\end{equation}

If $\frac{\lambda_1(\lambda_1-\lambda_2)}{\lambda_0\lambda_2}>0$, this gives a not $i\bR$-complete trajectory by Proposition \ref{P:t^2+a}. Otherwise, we can combine equation \eqref{eq: diag x1 x2}, \eqref{eq: diag xi x0} and \eqref{eq: diag EA} we arrive at the following ODE for $x_1(t)$

\begin{equation}\label{eq: diag x1}
    \dot{x}_1^2(t)=\frac{\lambda_2^2}{\lambda_1^2}\left(\frac{-m+\lambda_1^2x_1^2(t)}{\lambda_2^2}\right)\left(\frac{\lambda_2e-m}{\lambda_0\lambda_2}+\frac{\lambda_1(\lambda_1-\lambda_2)}{\lambda_0\lambda_2}x_1^2(t)\right)
\end{equation}

If $\frac{\lambda_1(\lambda_1-\lambda_2)}{\lambda_0\lambda_2}<0$, the trajectory belonging to $x_0(0)=x_1(0)=0\neq x_2(0)$ has $\lambda_2e-m=0$, so equation \eqref{eq: diag x1} becomes

\begin{equation}
    \dot{x}_1^2(t)=\left(-m\frac{\lambda_2^2}{\lambda_1^2}+x_1^2(t)\right)\frac{\lambda_1(\lambda_1-\lambda_2)}{\lambda_0\lambda_2}x_1^2(t).
\end{equation}
Using Theorem \ref{thm:x'^2}, the corresponding trajectory is not $i\bR$-complete.

\textit{Case 2}. The matrix of $\phi$ is nondegenerate and nondiagonalizable. Due to \cite{G96} there exists a basis $\{e_1,e_2,e_3\}$, such that for a trajectory $x(t)=x_1(t)e_1+x_2(t)e_2+x_3(t)e_3$, the Euler-Arnold equations become

\begin{equation}\begin{split}\label{eq: nondiag EA}
    \dot{x}_0(t)&=\frac{a}{2\lambda_0}(x_1(t)-x_2(t))^2 \\
    \dot{x}_1(t)&=-\frac{a}{\lambda}x_0(t)x_1(t)+\left(\frac{a}{\lambda}-1\right)x_0(t)x_2(t) \\
    \dot{x}_2(t)&=-\left(\frac{a}{\lambda}+1\right)x_0(t)x_1(t)+\frac{a}{\lambda}x_0(t)x_2(t) 
\end{split}\end{equation}

with first integrals

\begin{equation}\begin{split}
    e&=\lambda_0x_0^2(t)+(a/2+\lambda)x_1^2(t)+(a/2-\lambda)x_2^2(t)-ax_1x_2(t) \\
    m&=\lambda_0x_0^2(t)+\lambda/2(x_1^2(t)-x_2^2(t)),
\end{split}\end{equation}

Notice that if we substitute $m$ into the definition of $e$, we obtain the following identity:

\begin{equation}\label{eq: nondiag e m}
    e=-\lambda_0x_0^2(t)+2m+\frac{a}{2}(x_1(t)-x_2(t))^2.
\end{equation}

Combining equation \eqref{eq: nondiag EA} and \eqref{eq: nondiag e m} we get

\begin{equation}
    \dot{x}_0(t)=\frac{e-2m}{\lambda_0}+x_0^2(t).
\end{equation}

If $\frac{e-2m}{\lambda_0}<0$, $x_0(0)=0$, then using Proposition \ref{P:t^2+a} the corresponding trajectory is not $i\bR$-complete.

\begin{equation}
    \frac{e-2m}{\lambda_0}=\frac{a}{2\lambda_0}(x_1(0)-x_2(0))^2,
\end{equation}

which is negative iff $\frac{a}{2\lambda_0}<0$. In this case, the trajectory belonging to the initial condition $x(0)=0$, $x_1(0)\neq x_2(0)$ is not $i\bR$-complete.

If $\frac{a}{2\lambda_0}>0$ then $x_0(t)$ is $i\bR$-complete. Therefore we can use Proposition \ref{P:l-L-Omega} for the Euler-Arnold equation with the setting $\langle e_0\rangle=W$, $V=V$ and conclude that the connection in this case is $i\bR$-complete.

\textit{Case 3}. $\phi$ has complex eigenvalues. Then there exists a basis $\{e_1,e_2,e_3\}$, such that for a trajectory $x(t)=x_1(t)e_1+x_2(t)e_2+x_3(t)e_3$, Euler-Arnold equations are

\begin{equation}\begin{split}\label{eq: complex EA}
    \dot{x}_0(t)&=-\frac{\beta}{\lambda_0}(x_1^2(t)+x_2^2(t)) \\
    \dot{x}_1(t)&=x_0(t)(ax_1(t)+bx_2(t)) \\
    \dot{x}_2(t)&=x_0(t)(bx_1(t)-ax_2(t)),
\end{split}\end{equation}

where $a=\frac{2\alpha\beta}{\alpha^2+\beta^2},$ and $b=\frac{\beta^2-\alpha^2}{\alpha^2+\beta^2}$, with $\alpha, \beta\in\bR$ and $\beta\neq 0$. This system of equations has two first integrals:

\begin{equation}\begin{split}\label{eq: complex e m}
    e&=\lambda_0 x_0^2(t)+\alpha(x_1^2(t)-x_2^2(t))+2\beta x_1x_2 \\
    m&=b\lambda_0 x_0^2(t)+2\beta x_1x_2(t).
\end{split}\end{equation}

First, suppose that $\alpha\neq 0$. Then, by combining equation \eqref{eq: complex e m} and \eqref{eq: complex EA} we obtain the following ODE for $x_0(t)$:

\begin{equation}\begin{split}\label{eq:e(1,1) c-ev x_0}
    \dot{x}_0^2(t)&=\frac{\beta^2}{\lambda_0^2}\left((x_1^2(t)-x_2^2(t))^2+4x_1^2(t)x_2^2(t)\right) \\
    &=\frac{\beta^2}{\lambda_0^2}\left(\left(\frac{e-\lambda_0x_0^2(t)-(m-b\lambda_0x_0^2(t))}{\alpha}\right)^2+4\left(\frac{m-b\lambda_0x_0^2(t)}{2\beta}\right)^2\right).
\end{split}\end{equation}

If $(b-1)m=b(m-e)$, then $\dot{x}_0^2(t)$ is a complete square and \eqref{eq:e(1,1) c-ev x_0} becomes a Riccati-type equation. If we look at a trajectory for which $m=be$ then 

\begin{equation}
    (\dot{x}_0(t)-(x_0^2(t)-\frac{e}{\lambda_0}))(\dot{x}_0(t)+(x_0^2(t)-\frac{e}{\lambda_0}))=0.
\end{equation}

By the continuity of $\dot x_0$, either

\begin{equation}
    \dot{x}_0(t)=x_0^2(t)-\frac{e}{\lambda_0}\quad\text{ or}\quad\dot{x}_0(t)=-(x_0^2(t)-\frac{e}{\lambda_0})
\end{equation}
Using Proposition \ref{P:t^2+a} $x_0$ is not $i\bR$-complete if $\frac{e}{\lambda_0}>0$ and $x_0(0)=0$.

To find a trajectory such that $m=be$ we use their definition \eqref{eq: complex e m} and divide both sides by $\frac{\alpha}{\alpha^2+\beta^2}$, then after simplification we obtain the following:

\begin{equation}\label{eq: m=be}
    4\alpha\beta x_1(t)x_2(t)=(\beta^2-\alpha^2)(x_1^2(t)-x_2^2(t)).    
\end{equation}

If $\frac{\beta}{\lambda_0}>0$, then we can choose $x_1(0)=\alpha+\beta$ and $x_2(0)=\beta-\alpha$. This satisfies equation \eqref{eq: m=be}, so we only need to prove that $\frac e \lambda_0>0$.

\begin{equation}
    \frac{e}{\lambda_0}=\frac{1}{\lambda_0}(4\alpha^2\beta+2\beta^3-2\alpha^2\beta)=2\frac{\beta}{\lambda_0}(\alpha^2+\beta^2),
\end{equation}

so $\frac{e}{\lambda_0}>0$ and $x_0(t)$ is not $i\bR$-complete.

 If $\frac{\beta}{\lambda_0}<0$, then we can choose  $x_1(0)=\alpha-\beta$, $x_2(0)=\alpha+\beta$. Then

\begin{equation}
    \frac{e}{\lambda_0}=\frac{1}{\lambda_0}(-4\alpha^2\beta+2\beta(\alpha^2-\beta^2))=-2\frac{\beta}{\lambda_0}(\alpha^2+\beta^2).
\end{equation}

so $\frac{e}{\lambda_0}>0$ and $x_0(t)$ is not $i\bR$-complete.

Now, let $\alpha=0$. In this case, $e=m$, so we define a new first integral. Notice that
\begin{equation}
    c=\frac{e-m}{\alpha}=x_1^2(t)-x_2^2(t)+\frac{1-b}{\alpha}\lambda_0x_0^2(t)=x_1^2(t)-x_2^2(t)+\frac{2\alpha}{\alpha^2+\beta^2}\lambda_0x_0^2(t)
\end{equation} is a first integral, which is well-defined even for $\alpha=0$.

\begin{equation}\begin{split}\label{eq: complex e c}
    e&=\lambda_0 x_0^2(t)+2\beta x_1(t)x_2(t) \\
    c&=x_1^2(t)-x_2^2(t).
\end{split}\end{equation}

By combining equation $\eqref{eq: complex e c}$ and $\eqref{eq: complex EA}$ we obtain the following ODE for $x_0$:

\begin{equation}\label{eq: alpha0 x0}
    \dot{x}_0^2(t)=\frac{\beta^2}{\lambda_0^2}\left(c^2+4\left(\frac{e-\lambda_0x_0^2(t)}{2\beta}\right)^2\right).
\end{equation}

Now, if we look at a trajectory with initial condition $c=0$, equation \eqref{eq: alpha0 x0} simplifies to:

\begin{equation}
    \dot{x}_0(t)=\pm\left(x_0^2(t)-\frac{e}{\lambda_0}\right)
\end{equation}

If $x_0(0)=0$ and $e/\lambda_0>0$, then by Proposition \ref{P:t^2+a} the corresponding trajectory is not $i\bR$-complete. This means that we need to construct initial conditions such that $c=0$ to prove that these metrics are not $i\bR$-complete. 

If $\beta/\lambda_0>0$, then with the initial condition $x_1(0)=x_2(0)=1$, we have

\begin{equation}
    \frac{e}{\lambda_0}=2\frac{\beta}{\lambda_0}.
\end{equation}

Otherwise, if $\beta/\lambda_0<0$, then we can choose $x_1(0)=-x_2(0)=1$ to have

\begin{equation}
    \frac{e}{\lambda_0}=-2\frac{\beta}{\lambda_0}.
\end{equation}

In both cases, we have not $i\bR$-complete trajectories so neither of the corresponding metric is $i\bR$-complete.

\textit{Case 4}. $\phi$  is degenerate. Then there exists a basis such that the Euler-Arnold equations are

\begin{equation}\begin{split}\label{eq: deg EA}
    \dot{x}_0(t)&=2\lambda x_0(t)x_2(t) \\
    \dot{x}_1(t)&=-x_2(t)(x_0(t)+2\lambda x_1(t)) \\
    \dot{x}_2(t)&=\frac{x_0^2(t)}{2\lambda}
\end{split}\end{equation}

with first integrals

\begin{equation}\begin{split}\label{eq: deg first integral}
    e=&x_0(t)x_1(t)+\lambda x_2^2(t) \\
    m=&x_0^2(t)-4\lambda^2x_2^2(t).
\end{split}\end{equation}

By substituting $m$ into $\dot{x}_2(t)$ we get

\begin{equation}
    \dot{x}_2(t)=\frac{m+4\lambda^2x_2^2(t)}{2\lambda}.
\end{equation}

By using Proposition \ref{P:t^2+a} $x_2$ is not $i\bR$-complete iff $m<0$ and $x_2(0)=0$, but if $x_2(0)=0$, then $m=x_0^2\geq 0$. Therefore $x_2$ is always $i\bR$-complete, hence by applying Propositon \ref{P:l-L-Omega} to equation \eqref{eq: deg EA} we see that all the trajectories are $i\bR$-complete.\end{proof}

\begin{theorem}
    From Theorem \ref{T:nabla-i-complete} and Theorem \ref{thm:e11 left-invariant} it follows that metrics of type \eqref{eq:e11 i-complete metric} are $\mathcal{P}$-complete.
\end{theorem}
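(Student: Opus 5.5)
The statement is a direct combination of two earlier results, so the plan is to check that the hypotheses of Theorem \ref{T:nabla-i-complete} are met for each metric in the list \eqref{eq:e11 i-complete metric}.

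\textbf{Setup.} Fix a left-invariant pseudo-Riemannian metric $q$ on $E(1,1)$ belonging to one of the three families in \eqref{eq:e11 i-complete metric}. Let $\nabla$ be its Levi-Civita connection; it is left-invariant. By Corollary \ref{C:EAequation}, its Euler--Arnold vector field is $\mathcal E(x)=ad^*_xx$, and by Proposition \ref{P:leftinvconn} the geodesics of $\nabla$ correspond to integral curves of $\mathcal E$.

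\textbf{Step 1: $i\bR$-completeness.} Theorem \ref{thm:e11 left-invariant} states that $\mathcal E$ is $i\bR$-complete exactly for these three families. In its proof this was shown case by case:
\begin{itemize}
\item For $\lambda_1=\lambda_2$ in Case 1, $x_0$ is constant and the system is linear, so trajectories are even $\bC$-complete.
\item For Case 2 with $a/\lambda_0\ge0$, the equation $\dot x_0=\frac{e-2m}{\lambda_0}+x_0^2$ has $\frac{e-2m}{\lambda_0}\ge0$, so Proposition \ref{P:t^2+a} applies, and Proposition \ref{P:l-L-Omega} then lifts completeness to all of $\mathfrak g$.
\item For the degenerate Case 4, $m\ge0$ whenever $x_2(0)=0$, and the same two propositions apply.
\end{itemize}
One small point needs checking: Definition \ref{D:iR complete} requires the extension to live in $\mathfrak g\otimes\bC$. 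This holds here because the Euler--Arnold equations have real polynomial coefficients, so their complexification is the same system read in $\mathfrak g\otimes\bC$.

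\textbf{Step 2: conclusion.} Theorem \ref{T:nabla-i-complete} needs $G$ to be connected, and $E(1,1)$ as presented (parametrized by $a,b,t\in\bR$) is connected. Applying it gives $N_i(\eta,G_\bC)=TG$, and $P=(\psi_\eta)^{-1}_*(T^{1,0}G_\bC)$ is a $G$-invariant ac-polarization on $TE(1,1)$. Hence $\nabla$, and therefore $q$, is $\mathcal P$-complete.

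\textbf{Main obstacle.} The only nontrivial content is the $i\bR$-completeness in Step 1, and that is already supplied by Theorem \ref{thm:e11 left-invariant}. What remains is bookkeeping: confirming that the bases in Guediri's normal forms match the coordinate expressions in \eqref{eq:e11 i-complete metric}. This matters in particular for the third family, $e_0^*e_1^*+\lambda(e_2^*)^2$, which comes from the degenerate Case 4.
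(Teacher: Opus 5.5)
Your proposal is correct and is exactly the paper's argument. The paper gives no further proof: the $i\bR$-completeness of the Euler--Arnold field from Theorem \ref{thm:e11 left-invariant} is simply fed into Theorem \ref{T:nabla-i-complete}. One harmless slip in your optional recap: in Case 2 one actually has $\frac{e-2m}{\lambda_0}=\frac{a}{2\lambda_0}(x_1(0)-x_2(0))^2-x_0(0)^2$, which can be negative, but then $x_0(0)\neq 0$ and Proposition \ref{P:t^2+a} still gives $i\bR$-completeness, so your conclusion is unaffected.
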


\section{Biinvariant connections}

Let $G$ be a Lie group and $\nabla$ be a left-invariant connection on $TG$. As we have seen, the $\mathcal P$-completeness of $\nabla$ is determined by the $i\bR$-completeness of its Euler-Arnold vector field. In general, left-invariant connections are in bijection with homogeneous quadratic vector fields on $\mathfrak g$. Therefore, determining all $\mathcal P$-complete left-invariant connections is equivalent to determining the $i\bR$-completeness of all homogeneous quadratic vector fields in $\dim G$. Without further assumptions on $\nabla$, one does not use the structure of the underlying group $G$. This is why up to this point we treated metric connections, because then the Euler-Arnold vector field was $\dot x=ad^*_x x$.

Another set of special cases of left-invariant connections are biinvariant connections. To see how they are related to the Lie group structure, we use the following theorem.

\begin{theorem}
    Let $\nabla$ be a left-invariant Koszul connection on a connected real Lie group $G$. Then $\nabla$ is biinvariant iff

    \begin{equation}\label{eq:biinv conn}
        [x,\nabla_yz]=\nabla_{[x,y]}z+\nabla_y[x,z],
    \end{equation}

    for all $x,y,z\in\mathfrak{g}$.\cite{PP19}
\end{theorem}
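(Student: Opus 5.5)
We use the standard correspondence between left-invariant Koszul connections and bilinear maps $\alpha:\mathfrak g\times\mathfrak g\ra\mathfrak g$, $\alpha(x,y)=(\nabla_xy)_e$. We express right-invariance infinitesimally, i.e. as invariance under $\mathrm{Ad}$. A left-invariant connection is also right-invariant iff for every $g\in G$ the right translation $R_g$ is affine. Since $R_g=L_g\circ C_{g^{-1}}$, with $C_h(k)=hkh^{-1}$ conjugation, and $L_g$ is already affine, this is equivalent to every inner automorphism $C_h$ being affine.

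The plan proceeds in three steps.

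\textbf{Step 1: translate to the Lie algebra.} An automorphism $\phi$ of $G$ maps left-invariant vector fields to left-invariant vector fields via $d\phi_e$. Hence $\phi$ is affine for a left-invariant $\nabla$ iff
$$d\phi_e\,\alpha(y,z)=\alpha(d\phi_e y,d\phi_e z)\quad\text{for all } y,z\in\mathfrak g.$$
Indeed, $\phi_*\nabla$ is again left-invariant, so it suffices to compare the two connections on left-invariant fields at $e$. Applying this with $d(C_h)_e=\mathrm{Ad}_h$, we find that $\nabla$ is biinvariant iff
$$\mathrm{Ad}_h\,\alpha(y,z)=\alpha(\mathrm{Ad}_hy,\mathrm{Ad}_hz)\quad\text{for all } h\in G.$$

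\textbf{Step 2: differentiate.} Put $h=\exp(sx)$ and differentiate at $s=0$. Since $\frac{d}{ds}\mathrm{Ad}_{\exp sx}=\mathrm{ad}_x$ at $s=0$, we obtain
$$[x,\alpha(y,z)]=\alpha([x,y],z)+\alpha(y,[x,z]),$$
which is exactly \eqref{eq:biinv conn}, because $\nabla_yz$ for left-invariant fields is the left-invariant field $\alpha(y,z)$ and the bracket of left-invariant fields is the Lie algebra bracket. So biinvariance implies \eqref{eq:biinv conn}.

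\textbf{Step 3: integrate back.} For the converse, \eqref{eq:biinv conn} says that $\mathrm{ad}_x$ is a derivation of the (non-associative) algebra $(\mathfrak g,\alpha)$. Fix $y,z$ and set
$$F(s)=\mathrm{Ad}_{\exp(-sx)}\,\alpha(\mathrm{Ad}_{\exp sx}y,\mathrm{Ad}_{\exp sx}z).$$
Differentiating and using the derivation property at the point $(\mathrm{Ad}_{\exp sx}y,\mathrm{Ad}_{\exp sx}z)$ gives $F'(s)=0$. An equivalent way to see this is that $e^{s\,\mathrm{ad}_x}$ is an automorphism of $\alpha$, since the exponential of a derivation is an automorphism. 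Therefore $\mathrm{Ad}_{\exp x}$ preserves $\alpha$ for every $x$. Because $G$ is connected, it is generated by $\exp(\mathfrak g)$, so every $\mathrm{Ad}_h$ preserves $\alpha$. By Step 1, every $C_h$ is affine, and hence $\nabla$ is biinvariant.

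\textbf{Main obstacle.} The only delicate point is Step 1: justifying that a diffeomorphism that is a group automorphism is affine iff its differential at $e$ preserves $\alpha$. This follows because $\phi_*\nabla$ and $\nabla$ are both left-invariant, since $\phi\circ L_g=L_{\phi(g)}\circ\phi$, and left-invariant connections are determined by their values on left-invariant fields at $e$. The role of connectedness is confined to Step 3.
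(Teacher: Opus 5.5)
The paper does not prove this statement; it imports it from \cite{PP19}, so there is no internal argument to compare against. Your proof is correct and complete. Each step checks out:
\begin{itemize}
\item the reduction $R_g=L_g\circ C_{g^{-1}}$;
\item the verification that $\phi_*\nabla$ is again left-invariant for an automorphism $\phi$, using $\phi\circ L_h=L_{\phi(h)}\circ\phi$ and surjectivity of $\phi$;
\item the resulting criterion $\mathrm{Ad}_h\,\alpha(y,z)=\alpha(\mathrm{Ad}_hy,\mathrm{Ad}_hz)$;
\item the differentiation along $\exp(sx)$, using that the derivative of $\mathrm{Ad}_{\exp sx}$ at $s=0$ is $\mathrm{ad}_x$, i.e.\ the bracket of left-invariant fields;
\item the integration step $F'(s)=0$.
\end{itemize}
In the last step, say explicitly that $\{h\in G:\mathrm{Ad}_h\text{ preserves }\alpha\}$ is a subgroup because $\mathrm{Ad}$ is a homomorphism. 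That is what lets you pass from $\exp(\mathfrak g)$ to the group it generates.

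A more direct route, which also explains the exact shape of the identity, uses Lie derivatives. For vector fields $X,Y,Z$ one has
$$(\mathcal L_X\nabla)(Y,Z)=[X,\nabla_YZ]-\nabla_{[X,Y]}Z-\nabla_Y[X,Z],$$
which is tensorial in $Y$ and $Z$. The flow of the left-invariant field $x$ is $t\mapsto R_{\exp(tx)}$. Imposed on the left-invariant frame, the displayed condition therefore says exactly $\mathcal L_x\nabla=0$ for all $x\in\mathfrak g$. This is equivalent to invariance of $\nabla$ under every $R_{\exp(tx)}$, and connectedness then gives invariance under all $R_g$.

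That argument avoids conjugations and $\mathrm{Ad}$, but relies on the flow characterization of $\mathcal L_X\nabla=0$. Your version instead reduces everything to a purely algebraic fact: $\mathrm{ad}_x$ is a derivation of $(\mathfrak g,\alpha)$ iff each $e^{s\,\mathrm{ad}_x}$ is an automorphism of it. This is more elementary and makes the single use of connectedness transparent.
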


\begin{theorem}
    Let $G$ be a Lie group and $\mathfrak{g}$ its Lie algebra such that $[\mathfrak{g},\mathfrak{g}]\neq\mathfrak{g}$. Let $l\in\mathfrak{g}^*$ and $[\mathfrak{g},\mathfrak{g}]\subset Ker(l)$. Then the left-invariant connection $\nabla$ defined as 

    \begin{equation}\label{eq:l conn}
        \nabla_xy=\frac{1}{2}[x,y]+l(x)y+l(y)x
    \end{equation}

    is always torsion-free, biinvariant and $\mathcal{P}$-complete.
\end{theorem}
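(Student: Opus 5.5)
Three properties need checking: torsion-freeness, biinvariance, and $\mathcal P$-completeness. The first two are algebraic identities that follow from the definition \eqref{eq:l conn}. The third I will reduce, via Theorem \ref{T:nabla-i-complete}, to the $i\bR$-completeness of the Euler–Arnold vector field, which I expect to get from Proposition \ref{P:l-L-Omega}.

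\emph{Torsion-freeness.} Compute $\nabla_xy-\nabla_yx=\frac12[x,y]-\frac12[y,x]+l(x)y+l(y)x-l(y)x-l(x)y=[x,y]$. Hence $T(x,y)=\nabla_xy-\nabla_yx-[x,y]=0$ on left-invariant fields, and therefore everywhere by tensoriality.

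\emph{Biinvariance.} I will verify \eqref{eq:biinv conn} term by term. The bracket part $\frac12[\cdot,\cdot]$ satisfies \eqref{eq:biinv conn} by the Jacobi identity: $[x,[y,z]]=[[x,y],z]+[y,[x,z]]$. For the remaining part $\beta(y,z)=l(y)z+l(z)y$, the left side is $[x,\beta(y,z)]=l(y)[x,z]+l(z)[x,y]$. The right side is
\[
\beta([x,y],z)+\beta(y,[x,z])=l([x,y])z+l(z)[x,y]+l(y)[x,z]+l([x,z])y .
\]
Since $[\mathfrak g,\mathfrak g]\subset \ker l$, we have $l([x,y])=l([x,z])=0$, so the two sides agree. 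Both parts satisfy the linear condition \eqref{eq:biinv conn}, so their sum does, and $\nabla$ is biinvariant.

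\emph{$\mathcal P$-completeness.} This is the substantive step. Write $\alpha(x,y)=\nabla_xy$. The Euler–Arnold field is $\mathcal E(x)=-\alpha(x,x)=-\frac12[x,x]-2l(x)x=-2l(x)x$. This is exactly of the form $\varphi(x)=\tilde l(x)x$ with $\tilde l=-2l\in(\bR^n)^*$, so Proposition \ref{P:l-L-Omega}\,a) gives that $\mathcal E$ is $i\bR$-complete.

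If I want to see this directly, note that $u=l(x)$ satisfies $\dot u=-2u^2$, which is $i\bR$-complete by Proposition \ref{P:t^2+a}\,a) with $B=0$. Explicitly $u(t)=u_0/(1+2u_0t)$, whose only pole $-1/(2u_0)$ is real. Then $x(t)=x_0\exp(-2\int_0^t u)=x_0/(1+2u_0t)$, which has the same real pole.

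Theorem \ref{T:nabla-i-complete} then yields $N_i(\eta,G_\bC)=TG$ and a $G$-invariant ac-polarization on $TG$, so $\nabla$ is $\mathcal P$-complete. One caveat: Theorem \ref{T:nabla-i-complete} assumes $G$ connected, so if necessary I will restrict to the identity component, or assume $G$ connected as the paper implicitly does. Everything here is routine; the only potentially delicate point is handling $i\bR$-completeness in the complexified Lie algebra $\mathfrak g\otimes\bC$, and the explicit formula makes that transparent.
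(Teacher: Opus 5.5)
Your proposal is correct and follows essentially the same route as the paper: torsion-freeness by direct computation, biinvariance by verifying \eqref{eq:biinv conn} term by term (Jacobi identity together with $l([\mathfrak g,\mathfrak g])=0$), and $\mathcal P$-completeness via Theorem \ref{T:nabla-i-complete} applied to the Euler--Arnold field $-2l(x)x$, which is $i\bR$-complete by Proposition \ref{P:l-L-Omega}. Your explicit solution $x(t)=x(0)/(1+2l(x(0))t)$ and your remark about connectedness make explicit points that the paper leaves implicit.
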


\begin{proof}
    The torsion-freeness of the given family of connections is trivial.
    To prove biinvariance we need to check that equation \eqref{eq:biinv conn} holds for every connection of the type \eqref{eq:l conn}
    \begin{equation}
    \begin{split}
        [x,\nabla_yz]=\frac{1}{2}[x,[y,z]]+l(y)[x,z]+l(z)[x,y] \\
        \nabla_{[x,y]}z=\frac{1}{2}[[x,y],z]+l([x,y])z+l(z)[x,y] \\
        \nabla_y[x,z]=\frac{1}{2}[y,[x,z]]+l(y)[x,z]+l([x,z])y.
    \end{split}        
    \end{equation}

    Using Theorem \ref{T:nabla-i-complete} The $\mathcal{P}$-completeness of the connection follows from the $i\bR$-completeness of the corresponding Euler-Arnold vector field. The Euler-Arnold vector field is
    
    \begin{equation}
        \dot x(t)=-2l(x(t))x(t),
    \end{equation}

    which by Proposition \ref{P:l-L-Omega} is always $i\bR$-complete.
\end{proof}

\section{Solvable, non-unimodular Lie groups in dimension 3}

John Milnor \cite{M76} proved that the 3-dimensional non-unimodular Lie algebras can be characterized as follows: 

Consider the linear functional $tr(ad_x):\mathfrak{g}\rightarrow \bR$. Since the algebra is non-unimodular, this is not identically zero, hence has a 2-dimensional kernel, call it the \textit{unimodular kernel} denoted by $V$. Since this Lie-subalgebra is unimodular and 2-dimensional, it is commutative.

There exists an $e_0\in\mathfrak{g}\setminus V$ such that $tr(ad_{e_0})=2$. Now, either $A:=ad_{e_0}|_{V}$ acts on $V$ as the identity (thus $\mathfrak{g}$ belongs to $\mathcal{F}$) or there is $e_1\in V$ which is not an eigenvector of $A$. Thus for

\begin{equation}
    e_2=[e_0,e_1]
\end{equation}

we have the following.

\begin{equation}\begin{split}
    [e_0,e_1]&=e_2 \\
    [e_0,e_2]&=-De_1+2e_2 \\
    [e_1,e_2]&=0,
\end{split}\end{equation}

where $D$ is the only degree of freedom we have, so it is a complete isomorphism invariant. Notice that $det(A)=D$. For $x=x_0e_0+x_1e_1+x_2e_2\in\mathfrak{g}$

\begin{equation}
    ad_x=\begin{pmatrix}
    0 & 0 & 0 \\
    Dx_2 & 0 & -Dx_0 \\
    -x_1-2x_2 & x_0 & 2x_0
\end{pmatrix}
\end{equation}

In the following theorem, we focus on the Lie groups belonging to the $D=0$ case. This Lie algebra is isomorphic to the Lie algebra of $\textit{Aff}(1)\times \bR$, so we emphasize this isomorphism. We introduce a new basis $f_0=\frac{1}{2}e_0$, $f_1=e_2$, $f_2=2e_1-e_2$. To point out the structure of the Lie algebra even more let us denote $x_0f_0+x_1f_1$ by $x_a$. It is easy to see that for these Lie algebra elements the following identities hold:

\begin{equation}\begin{split}
    [f_0,f_1]&=\frac{1}{2}[e_0,e_2]=e_2=f_1 \\
    [f_0,f_2]&=\frac{1}{2}[e_0,2e_1-e_2]=0 \\
    [f_1,f_2]&=[e_2,2e_1-e_2]=0 
\end{split}  
\end{equation}

In other words, for $x,y\in\mathfrak{g}$ and $x=x_0f_0+x_1f_1+x_2f_2$ we obtain

\begin{equation}\label{eq:lie D0}
    [x,y]=(x_0y_1-y_0x_1)f_1.
\end{equation}

\begin{theorem}\label{thm:D0}
    Let $G$ be a solvable, non-unimodular Lie group of dimension 3 with $D=0$ and $\nabla$ a biinvariant torsion-free connection on $TG$. Then $\nabla$ takes the form
    
    \begin{equation}
        \nabla_xy=\frac{1}{2}[x,y]+l(x)y_a+l(y)x_a+(\nabla_x y)_2 f_2.      
    \end{equation}

    where $l\in\mathfrak{g*}$, with $l(f_1)=0$ and $(\nabla_x y)_2$ does not depend on $x_1$ or $y_1$.
\end{theorem}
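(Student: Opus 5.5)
The plan is to split off the bracket part and reduce biinvariance to an invariance condition on a symmetric tensor. Write $\nabla_xy=\frac12[x,y]+S(x,y)$. Torsion-freeness of $\nabla$ is equivalent to $S$ being a symmetric bilinear map $\mathfrak g\times\mathfrak g\ra\mathfrak g$.

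\textbf{Step 1: reduce to an equivariance condition.} Plug $\nabla$ into the biinvariance criterion \eqref{eq:biinv conn}. The term $\frac12[\cdot,\cdot]$ satisfies \eqref{eq:biinv conn} by itself, since that identity is exactly the Jacobi identity. So, by linearity, $\nabla$ is biinvariant iff
\begin{equation*}
[x,S(y,z)]=S([x,y],z)+S(y,[x,z])\qquad\text{for all } x,y,z\in\mathfrak g,
\end{equation*}
that is, iff every $ad_x$ acts as a derivation of $S$. It suffices to check this for $x=f_0,f_1,f_2$. By \eqref{eq:lie D0}, $f_2$ is central, so $x=f_2$ gives no condition. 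The only nonzero brackets among basis vectors are $[f_0,f_1]=f_1=-[f_1,f_0]$.

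\textbf{Step 2: the condition for $x=f_0$.} It reads
\begin{equation*}
S(y,z)_1f_1=y_1S(f_1,z)+z_1S(y,f_1).
\end{equation*}
Taking $y=z=f_1$ gives $S(f_1,f_1)_1f_1=2S(f_1,f_1)$, hence $S(f_1,f_1)=0$. Taking $y=f_1$ and $z\in\{f_0,f_2\}$ gives $S(f_1,z)=S(f_1,z)_1f_1$. So I define $l(f_0)$ and $l(f_2)$ by $S(f_1,f_0)=l(f_0)f_1$ and $S(f_1,f_2)=l(f_2)f_1$, and set $l(f_1):=0$. Taking $y,z\in\{f_0,f_2\}$ gives $S(y,z)_1=0$.

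\textbf{Step 3: the condition for $x=f_1$.} It reads
\begin{equation*}
-S(y,z)_0f_1=-y_0S(f_1,z)-z_0S(y,f_1).
\end{equation*}
Evaluating on the pairs $(f_0,f_0)$, $(f_0,f_2)$ and $(f_2,f_2)$ gives, respectively, $S(f_0,f_0)_0=2l(f_0)$, $S(f_0,f_2)_0=l(f_2)$ and $S(f_2,f_2)_0=0$. The pairs involving $f_1$ impose nothing new beyond Step 2.

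\textbf{Step 4: assemble the formula.} Compare all six basis values of $S$ with $l(x)y_a+l(y)x_a$, where $x_a=x_0f_0+x_1f_1$. The $f_0$- and $f_1$-components agree in every case: for instance $S(f_0,f_2)_a=l(f_2)f_0$, $S(f_1,f_0)=l(f_0)f_1$, and $S(f_1,f_1)=0$. What remains is an unconstrained $f_2$-component $(\nabla_xy)_2$. Since the bracket has no $f_2$-component and, by Step 2, $S(f_1,\cdot)\in\bR f_1$, this component does not depend on $x_1$ or $y_1$.

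The only delicate point is bookkeeping: one must make sure that no invariance equation with $x=f_1$ or $x=f_0$ is missed. Beyond that the argument is a finite linear-algebra check, so I expect no real obstacle. If the statement is also meant as a characterization, one verifies the converse by checking that every $S$ of this shape satisfies the two conditions above.
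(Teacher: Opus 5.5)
Your proof is correct and follows the paper's argument: evaluating the biinvariance identity at $x=f_0$ forces $S(f_1,f_1)=0$ and $S(f_1,\cdot)\in\bR f_1$, and evaluating it at $x=f_1$ then determines the $f_0$-components. The only difference is bookkeeping: you split off $\frac12[x,y]$ first and use the Jacobi identity to work with the symmetric part $S$ directly, which spares you the paper's torsion-freeness step from $\nabla_{f_1}z$ to $\nabla_z f_1$ and its final shift $l=\frac12 f_0^*+l_0$.
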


\begin{proof}

    To prove the statement of the theorem, we only need to determine the class of connections for wich equation \eqref{eq:biinv conn} holds. Suppose we have such a connection $\nabla$. Then by using equation \eqref{eq:lie D0} we obtain
    \begin{equation}\label{eq:biinv D0}
        \begin{split}
            [x,\nabla_y z]&=(x_0(\nabla_y z)_1-(\nabla_y z)_0x_1)f_1 \\
            \nabla_{[x,y]}z+\nabla_y[x,z]&=(x_0y_1-y_0x_1)\nabla_{f_1}z+(x_0z_1-z_0x_1)\nabla_y f_1.
        \end{split}
    \end{equation}

    for $x,y,z\in\mathfrak{g}$. If we choose $x=f_0$ in equation \eqref{eq:biinv D0} then we obtain the following identity

    \begin{equation}\label{eq:D0 x0}
        (\nabla_y z)_1f_1=y_1\nabla_{f_1}z+z_1\nabla_y f_1.
    \end{equation}

    Substituting $y=z=f_1$ we get

    \begin{equation}\label{eq:D0 111}
        (\nabla_{f_1}f_1)_1f_1=2\nabla_{f_1}f_1,
    \end{equation}

     which is therefore equal to $0$. Setting $y=f_1$ $z=f_j$ in equation \eqref{eq:D0 x0} for $j\neq 1$

    \begin{equation}\label{eq:D0 f1}
        (\nabla_{f_1}f_j)_1f_1=\nabla_{f_1}f_j,
    \end{equation}

    hence 
    
    \begin{equation}\label{eq:D0 l0}
        \nabla_{f_1}x=l_0(x)f_1,
    \end{equation} for some $l_0\in\mathfrak{g}^*$, with $l_0(f_1)=0$ because of the equation \eqref{eq:D0 111}. By combining equation \eqref{eq:D0 f1} and torsion-freeness we also get that.
    
    \begin{equation}\label{eq:D0 f1*}
        \nabla_z f_1=[z,f_1]+\nabla_{f_1}z=z_0f_1+l_0(z)f_1
    \end{equation}

    Equations \eqref{eq:D0 l0} and \eqref{eq:D0 f1*} imply that $(\nabla_x y)_0$ and $(\nabla_x y)_2$ do not depend on $x_1$ or $y_1$.
    
    Putting together equations \eqref{eq:D0 x0}, \eqref{eq:D0 l0} and \eqref{eq:D0 f1*} we can see that

    \begin{equation}\label{eq:D0 x1}
        (\nabla_y z)_1f_1=(l_0(y_1z+z_1y)+y_0z_1)f_1.
    \end{equation}

    Now, looking at equation \eqref{eq:biinv D0} at $x=f_1$ we get

    \begin{equation}\label{eq:D0 x1 f1}
        (\nabla_y z)_0f_1=y_0\nabla_{f_1}z+z_0\nabla_y f_1=l_0(y_0z+z_0y)f_1+z_0y_0f_1.
    \end{equation}

    Combining equation \eqref{eq:D0 x1} and \eqref{eq:D0 x1 f1} we get that

    \begin{equation}\label{eq:D0 nabla}
        \begin{split}
            \nabla_xy=&(\nabla_x y)_0f_0+(\nabla_x y)_1f_1+(\nabla_x y)_2 f_2 \\
            =&(l_0(x_0y+y_0x)+y_0x_0)f_0+\\&(l_0(x_1y+y_1x)+x_0y_1)f_1+(\nabla_x y)_2 f_2 \\
            =&\frac{1}{2}[x,y]+(l_0(x_0y+y_0x)+y_0x_0)f_0+\\&(l_0(x_1y+y_1x)+\frac{1}{2}x_0y_1+\frac{1}{2}y_0x_1)f_1+
            (\nabla_x y)_2 f_2
        \end{split}        
    \end{equation}

    Denote $\frac{1}{2}f_0^*+l_0\in\mathfrak{g}^*$ by $l$. Then equation \eqref{eq:D0 nabla} simplifies to

    \begin{equation}\label{eq:D0 nabla'}
        \nabla_xy=\frac{1}{2}[x,y]+l(x)y_a+l(y)x_a+(\nabla_x y)_2 f_2.
    \end{equation}

    We only need to prove at this point that for a connection of the form \eqref{eq:D0 nabla'} equation \eqref{eq:biinv conn} always holds.

    \begin{equation}
        \begin{split}
            [x,\nabla_y z]&=\frac{1}{2}[x,[y,z]]+l(y)[x,z]+l(z)[x,y] \\
            \nabla_{[x,y]}z&=\frac{1}{2}[[x,y],z]+l(z)[x,y]\\
            \nabla_y[x,z]&=\frac{1}{2}[y,[x,z]]+l(y)[x,z]
        \end{split}
    \end{equation}

    The sum of the latter two equations clearly equals the first one, thus a connection of the form \eqref{eq:D0 nabla'} is always biinvariant.\end{proof}

\begin{cor}\label{C:D0}
    Let $G$ be a solvable, non-unimodular Lie group of dimension three with $D=0$. Then there exist biinvariant torsion-free connections $\nabla$ on $TG$ which are not $i\bR$-complete.
\end{cor}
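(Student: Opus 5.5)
The plan is to exhibit one explicit connection of the form given in Theorem \ref{thm:D0} and show that it has a non-$i\bR$-complete trajectory. There are two free pieces of data in that normal form: the functional $l$ and the $f_2$-component $(\nabla_xy)_2$. I will choose them so that the Euler--Arnold system decouples into a scalar Riccati equation, and then invoke Proposition \ref{P:t^2+a}.

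\textbf{Step 1: choose the data and check it is admissible.} Take $l_0=-\frac12 f_0^*$. Then $l_0(f_1)=0$ and $l=\frac12 f_0^*+l_0=0$, so the $l$-terms vanish. Define
$$\nabla_xy=\tfrac12[x,y]+\beta(x,y)f_2,\qquad \beta(x,y)=x_2y_2-x_0y_0 .$$
Here $\beta$ is symmetric and does not depend on $x_1$ or $y_1$.

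Torsion-freeness is immediate: $\nabla_xy-\nabla_yx=[x,y]$ because $\beta$ is symmetric.

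For biinvariance I check \eqref{eq:biinv conn} directly rather than relying on the final verification in the proof of Theorem \ref{thm:D0}, since that verification did not treat the $f_2$-part.
\begin{itemize}
\item The $\frac12[\cdot,\cdot]$ parts satisfy \eqref{eq:biinv conn} by the Jacobi identity.
\item For the $f_2$-parts, the left side gives $[x,\beta(y,z)f_2]=0$, because $f_2$ is central by \eqref{eq:lie D0}.
\item On the right side, the $f_2$-contributions are $\beta([x,y],z)+\beta(y,[x,z])$. Both brackets lie in $\langle f_1\rangle$ and $\beta$ ignores the $f_1$-coordinates, so this sum is $0$.
\item The remaining cross terms are $\frac12[\beta(\cdot,\cdot)f_2,\cdot]$ and $\frac12[\cdot,\beta(\cdot,\cdot)f_2]$. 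These also vanish because $f_2$ is central.
\end{itemize}
Hence $\nabla$ is a biinvariant torsion-free connection of the type in Theorem \ref{thm:D0}.

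\textbf{Step 2: write the Euler--Arnold system.} The Euler--Arnold vector field is $-\nabla_xx=-\beta(x,x)f_2$, which gives
$$\dot x_0=0,\qquad \dot x_1=0,\qquad \dot x_2=-x_2^2+x_0^2 .$$
So $x_0(t)\equiv x_0(0)=:c$ is constant. The last coordinate then satisfies $\dot x_2=Ax_2^2+B$ with $A=-1$ and $B=c^2$.

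\textbf{Step 3: find the bad trajectory.} Take $c\ne0$, so that $B/A<0$. Proposition \ref{P:t^2+a}(a) then says the field is not $i\bR$-complete at $x_2(0)=0$. Explicitly, the solution is $x_2(t)=c\tanh(ct)$, which has poles at $t=i\pi/(2c)+ik\pi/c$ on the imaginary axis. Therefore the trajectory with initial value $x(0)=cf_0$, $c\neq0$, does not extend holomorphically to a neighbourhood of $i\bR$, and $\nabla$ is not $i\bR$-complete.

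The only delicate point is Step 1. Theorem \ref{thm:D0} gives a necessary form, but its proof verified biinvariance only for the part without $f_2$, so one must confirm that an arbitrary symmetric $f_2$-component independent of the $f_1$-coordinates really preserves \eqref{eq:biinv conn}. The argument above settles this using that $f_2$ is central and $[\mathfrak g,\mathfrak g]=\langle f_1\rangle$. Everything else reduces to Proposition \ref{P:t^2+a}.
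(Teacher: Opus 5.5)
Your proof is correct and follows essentially the same route as the paper: take $l=0$, choose the $f_2$-component so that $x_0,x_1$ stay constant and $x_2$ satisfies a scalar Riccati equation, then apply Proposition \ref{P:t^2+a}. The paper instead uses $(\nabla_xy)_2=-\frac{c_1}{2}(x_0y_2+x_2y_0)-c_2x_2y_2$ and part (b), which tacitly needs $c_1\neq0$ (for $c_1=0$ the equation $\dot x_2=c_2x_2^2$ is $i\bR$-complete). Your $x_0y_0$ term avoids this by letting you use part (a) with the explicit solution $c\tanh(ct)$, and your direct biinvariance check of the $f_2$-part fills in what the final verification in Theorem \ref{thm:D0} left implicit.
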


This theorem in itself is not of particular interest. The aim of presenting this statement as a theorem is to highlight the difference between the $D=0$ case and the $D\neq 0$ case. \ref{thm:general F biinv}

\begin{proof}
    Let $G$ be a solvable, non-unimodular Lie group of dimension three with $D\neq 0$ and $\nabla$ a biinvariant torsion-free Koszul connection on $TG$. Then by Theorem \ref{thm:D0} $\nabla$ takes the form
    
    \begin{equation}
        \nabla_xy=\frac{1}{2}[x,y]+l(x)y_a+l(y)x_a+(\nabla_x y)_2 f_2.      
    \end{equation}

    for some $l\in\mathfrak{g*}$, with $l(f_1)=0$ and $(\nabla_x y)_2$ not depending on $x_1$ or $y_1$. 

    Let $l=0$, and $(\nabla_x y)_2=-\frac{c_1}{2}(x_0y_2+x_2y_0)-c_2x_2y_2$, with $c_2<0$.  Then the Euler-Arnold equation is

    \begin{equation}
        \begin{split}
            \dot x_0(t)&=0\\
            \dot x_1(t)&=0\\
            \dot x_2(t)&=c_1x_0x_2+c_2x_2^2.
        \end{split}
    \end{equation}

    This system of ODEs is not $i\bR$-complete because of Proposition \ref{P:t^2+a}.
\end{proof}

\section{General \texorpdfstring{$\mathcal F$}-type Lie groups}

In our previous paper \cite{FSz26} we investigated with Róbert Szőke the biinvariant connections of $\mathcal F$-type Lie groups in the context of $\mathcal P$-completeness. After that, we tried to do the same for $E(2)$ and $E(1,1)$. In the process we noticed that biinvariant connections behave in a similar way for these groups. Therefore I came up with a common generalization of these groups for which the same proof works.

\begin{definition}
     Let $G$ be a Lie group with Lie algebra $\mathfrak{g}$. Let  $\mathfrak{g}=\langle e_0\rangle\oplus V$, where $V$ is abelian, and $ad_{e_0}|_V=A\in GL(V)$. Then we call $G$ $general \ \mathcal{F}-type$. Denote a generic element $x$ of $\mathfrak{g}$ by $x=x_0e_0+x_v$, where $x_v\in V$.
\end{definition}

Notice that

\begin{equation}\label{eq:lie e11}
    [x,y]=x_0Ay_v-y_0Ax_v.
\end{equation}

From equation \eqref{eq:lie e11} it follows that groups of general $\mathcal F$-type are 2-step solvable.

\begin{example}
    Let 
    \begin{equation}\label{eq:general F}
        G=\left\{ \begin{pmatrix}
    
     e^{At} & v \\
     0 & 1
\end{pmatrix}\in GL(n+1,\bR)\Bigg|t\in\bR,v\in\bR^n,A\in GL(n,\bR)\right\},
    \end{equation}
    then $G$ is general $\mathcal{F}$-type.
\end{example}

It might seem strange at first that we look at $e^{A}$, with $A\in GL(n,\bR)$ and not $End(n,\bR)$. This might appear to be a typo, but Corollary \ref{C:D0} and Theorem \ref{thm:general F biinv} prove that the degeneracy of $A$ leads to vastly different behaviors.

\begin{example}
    Let $A=I_n$, then $G$ in \eqref{eq:general F} is $\mathcal{F}$-type.
\end{example}

\begin{example}
    Let $n=2$ and 
    \begin{equation}
        A=\begin{pmatrix}
            0 &-1 \\
            1 & 0
        \end{pmatrix}
    \end{equation}then $G$ in \eqref{eq:general F} is $E(2)$.
\end{example}

\begin{example}
    Let $n=2$ and 
    \begin{equation}
        A=\begin{pmatrix}
            0 & 1 \\
            1 & 0
        \end{pmatrix}
    \end{equation}then $G$ in \eqref{eq:general F} is $E(1,1)$.
\end{example}

\begin{example}
    Let $n=2$ and 
    \begin{equation}
        A=\begin{pmatrix}
            0 &-D \\
            1 & 2
        \end{pmatrix}
    \end{equation}then $G$ in \eqref{eq:general F} is a solvable, non-unidular Lie group of dimension 3.
\end{example}
\begin{theorem}\label{thm:biinv metric}
    Let $q$ be a left-invariant pseudo-Riemannian metric on a connected real Lie group $G$. Then $q$ is biinvariant iff

    \begin{equation}\label{eq:biinv metric}
        q([x,y],z)+q(y,[x,z])=0,
    \end{equation}

    for all $x,y,z\in\mathfrak{g}$.\cite{O83}
\end{theorem}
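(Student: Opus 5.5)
The plan is the standard one: translate biinvariance of $q$ into invariance under the adjoint action of $G$, and then into the infinitesimal condition \eqref{eq:biinv metric} by differentiation and integration. Since $q$ is left-invariant, it is determined by the bilinear form $q_e$ on $\mathfrak g$. For $g\in G$ we have $R_g = L_g\circ C_{g^{-1}}$, where $C_h(k)=hkh^{-1}$. Hence $q$ is also right-invariant iff $q$ is invariant under all conjugations. At the identity this says
$$q_e(\mathrm{Ad}_g y,\mathrm{Ad}_g z)=q_e(y,z)\qquad\text{for all } g\in G,\ y,z\in\mathfrak g.$$
So the first step is to prove carefully that biinvariance of $q$ is equivalent to $\mathrm{Ad}(G)\subset O(q_e)$.

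For this step, consider left-invariant $q$ and any $g$. The pullback $R_g^*q$ is again left-invariant, because left and right translations commute. Therefore $R_g^*q=q$ iff the two agree at $e$, and at $e$ the form $R_g^*q$ is $q_e(\mathrm{Ad}_{g^{-1}}\cdot,\mathrm{Ad}_{g^{-1}}\cdot)$ after left-translating back.

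The second step is the infinitesimal version. Suppose $\mathrm{Ad}(G)$ preserves $q_e$. Put $g=\exp(tx)$ and differentiate $q_e(\mathrm{Ad}_{\exp tx}y,\mathrm{Ad}_{\exp tx}z)$ at $t=0$. Using $\frac{d}{dt}\big|_{t=0}\mathrm{Ad}_{\exp tx}=\mathrm{ad}_x$, this gives \eqref{eq:biinv metric}.

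Conversely, assume \eqref{eq:biinv metric}. Then $\mathrm{ad}_x$ lies in the Lie algebra $\mathfrak{o}(q_e)$ of the closed subgroup $O(q_e)\subset GL(\mathfrak g)$. Consequently $\mathrm{Ad}_{\exp x}=e^{\mathrm{ad}_x}\in O(q_e)$. One can also see this directly: the function $f(t)=q_e(e^{t\,\mathrm{ad}_x}y,e^{t\,\mathrm{ad}_x}z)$ has $f'(t)=0$ by \eqref{eq:biinv metric} applied to the vectors $e^{t\,\mathrm{ad}_x}y$ and $e^{t\,\mathrm{ad}_x}z$.

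The final step uses connectedness. Since $G$ is connected, it is generated by $\exp(\mathfrak g)$. Because $\mathrm{Ad}$ is a homomorphism and $O(q_e)$ is a group, it follows that $\mathrm{Ad}(G)\subset O(q_e)$. By the first step, $q$ is then biinvariant.

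The only point needing care is the bookkeeping in the first step: one must correctly identify $R_g^*q$ at $e$ with the $\mathrm{Ad}_{g^{-1}}$-transformed form, using left-invariance to reduce to the identity. Everything else is routine. Connectedness is essential, since without it only the identity component would be controlled.
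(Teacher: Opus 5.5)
Your argument is correct. You reduce right-invariance to $\mathrm{Ad}(G)$-invariance of $q_e$ via $R_g=L_g\circ C_{g^{-1}}$, differentiate along $\exp(tx)$, and integrate back using that $\exp(\mathfrak g)$ generates the connected group $G$; the paper gives no proof of its own and only cites O'Neill, where this same standard argument appears.
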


Lie groups for which there exist a biinvariant Riemannian metric have been characterized before by Milnor in \cite{M76}. To our knowledge, a similar classification does not exist for pseudo-Riemann metrics.

\begin{theorem}
    Let $G$ be a general $\mathcal{F}$-type Lie group. Then there exists no biinvariant pseudo-Riemannian metric on $G$.
\end{theorem}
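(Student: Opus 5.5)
The plan is to apply the infinitesimal criterion of Theorem \ref{thm:biinv metric} and show that the abelian ideal $V$ would have to lie in the radical of $q$. This contradicts nondegeneracy as soon as $V\neq 0$. We assume $\dim V\geq 1$, which is implicit in the definition: otherwise $G$ is one-dimensional abelian and every left-invariant metric is biinvariant.

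Suppose, for contradiction, that $q$ is a biinvariant pseudo-Riemannian metric on the general $\mathcal F$-type group $G$. Then \eqref{eq:biinv metric} holds for all $x,y,z\in\mathfrak g$. Throughout we use the bracket formula \eqref{eq:lie e11}. In particular, for $x\in V$ we have $[x,e_0]=-Ax$, and $[x,z]=0$ whenever $x,z\in V$.

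The first step is to show $q(V,V)=0$. Take $x\in V$, $y=e_0$ and $z\in V$ in \eqref{eq:biinv metric}. The second term $q(e_0,[x,z])$ vanishes because $V$ is abelian, so we are left with $q(-Ax,z)=0$. Since this holds for all $x,z\in V$ and $A\in GL(V)$ is surjective onto $V$, we get $q(V,V)=0$.

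The second step is to show $q(V,e_0)=0$. Take $x\in V$ and $y=z=e_0$. Then \eqref{eq:biinv metric} reads
\begin{equation*}
q(-Ax,e_0)+q(e_0,-Ax)=-2q(Ax,e_0)=0 .
\end{equation*}
Invertibility of $A$ again gives $q(V,e_0)=0$.

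Combining the two steps, every $v\in V$ satisfies $q(v,\mathfrak g)=0$, since $\mathfrak g=\langle e_0\rangle\oplus V$. Thus $V$ lies in the radical of $q$. As $V\neq 0$, this contradicts the nondegeneracy of $q$, so no biinvariant pseudo-Riemannian metric exists.

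There is no real obstacle in this argument. The only point needing care is the use of the invertibility of $A$: without it, only $A(V)$ would be forced into the radical. This is consistent with the paper's remark that degenerate $A$ (the $D=0$ case) behaves differently.
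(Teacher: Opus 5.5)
Your proof is correct, but it finishes differently from the paper's.

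The paper uses only your first instance of the invariance identity. From $q(AV,V)=0$ and surjectivity of $A$ it concludes $V\subseteq V^{\perp}$. Nondegeneracy gives $\dim V^{\perp}=\dim\mathfrak g-\dim V=1$, so this forces $\dim V=1$. The remaining case $\mathfrak g\cong\mathfrak{aff}(1)$ is then dismissed by citing the known fact that \textit{Aff}$(1)$ carries no biinvariant pseudo-Riemannian metric.

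You instead apply the identity a second time, with $y=z=e_0$, to get $q(V,e_0)=0$. Together with $q(V,V)=0$, this puts $V$ into the radical of $q$ at once. Your argument is therefore uniform in $\dim V$ and self-contained, with no dimension count and no external reference. In fact, your second step is exactly a proof of the $\mathfrak{aff}(1)$ case that the paper quotes. The paper's route has only the minor merit of exhibiting the obstruction as a totally isotropic subspace of codimension one. When $\dim V\geq 2$, that obstruction is detected by the instances with $x,z\in V$ alone.

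One correction to your closing remark. Your two steps give $q(Ax,\mathfrak g)=0$ for every $x\in V$, i.e.\ $A(V)\subseteq\mathrm{rad}(q)=0$. So your argument in fact excludes biinvariant metrics whenever $A\neq 0$, invertible or not. It is the paper's isotropy-and-dimension argument that genuinely needs $A$ to be surjective. The different behaviour of degenerate $A$ mentioned in the paper concerns biinvariant connections, not metrics.
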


\begin{proof} 
Let $G$ be a general $\mathcal{F}$-type Lie group and suppose that $q$ is a biinvariant pseudo-Riemannian metric on $G$. By Theorem \ref{thm:biinv metric}, we only need to check that Equation \eqref{eq:biinv metric} holds. If it holds, then in particular

\begin{equation}
    q([e_0,x],y)+q(e_0,[x,y])=0
\end{equation}

for all $x,y\in V$. But since $V$ is abelian, it means that $[e,x]\in V\cap V^{\perp}$ but since $ad_{e_0}$ is surjective $V=V^{\perp}$. This can only happen, if $\dim(V)=1$, since $V$ has codimension 1.

If $\dim(V)=1$, then $G$ is isomorphic to $\textit{Aff}(1)$, which has no biinvariant pseudo-Riemannian metric, as is well-known. \cite{V19}
\end{proof}

The classification of pseudo-Riemannian metrics on general $\mathcal F$-type Lie groups in terms of $\mathcal P$-completeness is beyond the scope of this article. Pseudo-Riemannian metrics for non-unimodular 3 dimensional metrics have been classified up to automorphism by Ha and Lee. \cite{HL09}\cite{HL23}

\begin{theorem}\label{thm:general F biinv}
    Let $G$ be a general $\mathcal{F}$-type Lie group. Let $\nabla$ be a torsion-free biinvariant Koszul connection on $TG$. Then there exists an $a\in\bR$ such that

    \begin{equation}\label{eq:general biinv}
        \nabla_x y=\frac{1}{2}[x,y]+\frac{a}{2}(x_0y+y_0x).
    \end{equation}
    for all $x,y\in\mathfrak g$. These connections are all $\mathcal P$-complete.

\end{theorem}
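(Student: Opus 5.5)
The plan is to mimic the proof of Theorem \ref{thm:D0}: write $\nabla_xy=\frac12[x,y]+S(x,y)$ with $S$ symmetric (by torsion-freeness) and impose the biinvariance condition \eqref{eq:biinv conn}. Since $\frac12[\cdot,\cdot]$ satisfies \eqref{eq:biinv conn} by the Jacobi identity, the condition reduces to $ad_x$ being a derivation of $S$:
\begin{equation*}
[x,S(y,z)]=S([x,y],z)+S(y,[x,z]),\qquad x,y,z\in\mathfrak g.
\end{equation*}
I will then exploit the bracket formula \eqref{eq:lie e11} together with the invertibility of $A$.

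First take $x=u\in V$ and $y,z\in V$. Then $[u,y]=[u,z]=0$, so $[u,S(y,z)]=0$, i.e.\ $-S(y,z)_0Au=0$ for all $u$. Since $A$ is invertible, $S(y,z)_0=0$; thus $S(V,V)\subset V$. Next take $x=e_0$ and $y,z\in V$: we get $AS(y,z)=S(Ay,z)+S(y,Az)$. Also take $x=u\in V$, $y=e_0$, $z\in V$: the left side is $-S(e_0,z)_0Au$, while the right side is $S(-Au,z)+0$. Hence $S(Au,z)=S(e_0,z)_0Au$ for all $u,z\in V$. Since $A$ is surjective, $S(w,z)=\mu(z)w$ with $\mu(z):=S(e_0,z)_0$. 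Symmetry then gives $\mu(z)w=\mu(w)z$ for all $w,z\in V$, forcing $\mu=0$ if $\dim V\ge2$, and hence $S|_{V\times V}=0$. If $\dim V=1$ this step needs separate care, but the derivation identity with $x=e_0$ gives $AS(y,y)=2S(Ay,y)$, i.e.\ $\lambda s=2\lambda s$, so again $s=0$.

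Now determine $S(e_0,\cdot)$. Take $x=u$, $y=z=e_0$: $[u,S(e_0,e_0)]=2S(-Au,e_0)$. By the same surjectivity argument as before, $S(w,e_0)=\frac12 S(e_0,e_0)_0\,w$ for $w\in V$. Write $a:=S(e_0,e_0)_0$, so that $S(e_0,w)=\frac a2 w$. Finally, take $x=e_0$, $y=z=e_0$: $[e_0,S(e_0,e_0)]=0$, so $A\,S(e_0,e_0)_v=0$ and $S(e_0,e_0)_v=0$ by invertibility; hence $S(e_0,e_0)=ae_0$. Assembling by bilinearity gives $S(x,y)=\frac a2(x_0y+y_0x)$, which is \eqref{eq:general biinv}. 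The converse, that every such connection is biinvariant and torsion-free, is the case $l=\frac a2 e_0^*$ of the theorem on connections of type \eqref{eq:l conn}: here $l$ vanishes on $V\supset[\mathfrak g,\mathfrak g]$, and $l(x)y+l(y)x=\frac a2(x_0y+y_0x)$. That theorem also yields $\mathcal P$-completeness directly, since the Euler–Arnold field $\dot x=-a x_0x$ is of the form in Proposition \ref{P:l-L-Omega}(a).

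The main obstacle is bookkeeping: I must make sure that every component of $S$ is pinned down, which means covering all pairs $(y,z)$ with each type of $x$. The step using surjectivity of $A$ to turn $S(Au,z)=\mu(z)Au$ into a statement about all of $V$ is exactly where nondegeneracy of $A$ enters; this contrasts with Corollary \ref{C:D0}, where $A$ is degenerate. The one-dimensional case should be checked by hand.
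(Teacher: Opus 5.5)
Your proof is correct and is essentially the paper's argument: both evaluate \eqref{eq:biinv conn} on elements of $\langle e_0\rangle$ and $V$, use invertibility of $A$ to force $\nabla_{e_0}e_0=ae_0$, $\nabla_{e_0}w+\nabla_we_0=aw$ and $\nabla|_{V\times V}=0$, and then conclude with Proposition \ref{P:l-L-Omega} and Theorem \ref{T:nabla-i-complete}. Your version is more self-contained, since the paper defers the $e_0$-components to the case $A=I$ in \cite{FSz26}; also, your separate treatment of $\dim V=1$ is unnecessary, because the step $x=u$, $y=z=e_0$ already gives $\mu=0$, hence $S(Au,z)=0$ for all $u,z\in V$.
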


\begin{proof}

    The case of $A=I$ was treated in \cite[Theorem 9.1.]{FSz26}. The fact that

    \begin{equation}\label{eq:F-type biinv}
        x_0^2\nabla_{e_0}e_0+x_0(\nabla_{e_0}x_v+\nabla_{x_v}e_0)=x_0(\nabla_{e_0}e_0)_0x
    \end{equation}

    and $(\nabla_{x_v}x_v)_0=0$ can be proven in a similar fashion.

      To treat $(\nabla_{x_v}x_v)_v$ we choose $x=e_0$, and use equation \eqref{eq:lie e11} and \eqref{eq:biinv conn} to get

    \begin{equation}\label{eq:e11 biinv V}
        A(\nabla_y z)_v=\nabla_{Ay_v} z+\nabla_y Az_v.
    \end{equation}

    Using equation \eqref{eq:F-type biinv}

    \begin{equation}\label{eq:general biinv beta}
            \nabla_x x           =x_0(\nabla_{e_0}e_0)_0x+\nabla_{x_v}x_v.     
    \end{equation}

    Since $\nabla$ is torsion-free, the values of $\nabla_xx$ completely determine it completely.

    We also need to show, that the left-invariant connection defined by equation \eqref{eq:general biinv beta} is in fact biinvariant. Denote $(\nabla_{e_0}e_0)_0$ by $a$. To check biinvariance, it suffices to prove that $\nabla$ satisfies equation \eqref{eq:biinv conn}.

    \begin{equation}
        \begin{split}
            [x,\nabla_yz]&=[x,\frac{1}{2}[y,z]+\frac{a}{2}(y_0z+z_0y)+\nabla_{y_v}z_v]\\
            &=\frac{1}{2}[x,[y,z]]+\frac{a}{2}y_0[x,z]+\frac{a}{2}z_0[x,y]+x_0A(\nabla_{y_v}z_v)\\
            \nabla_{[x,y]}z&=\frac{1}{2}[[x,y],z]+\frac{a}{2}z_0[x,y]+\nabla_{x_0Ay_v-y_0Ax_v}z_v\\
            \nabla_y[x,z]&=\frac{1}{2}[y,[x,z]]+\frac{a}{2}y_0[x,z]+\nabla_{y_v}(x_0Az_v-z_0Ax_v).
        \end{split}
    \end{equation}

    The sum of the latter two equations has to equal the first. From Equation \eqref{eq:e11 biinv V} we have that

    \begin{equation}
        \nabla_{y_0Ax_v}z_v+\nabla_{y_v}(z_0Ax_v)=0
    \end{equation}

    for all $x,y,z\in \mathfrak{g}$. With the choice of $y_0=0$, $z_0=1$ and $x_v=A^{-1}z_v$ we obtain

    \begin{equation}
        \nabla_{y_v}z_v=0.
    \end{equation}
    
    And thus, the Euler-Arnold equation is 

    \begin{equation}
        \dot{x}(t)=-a x_0(t)x(t).
    \end{equation}
    
    Proposition~\ref{P:l-L-Omega} implies that such a vector field is always $i\bR$-complete, and by Theorem \ref{T:nabla-i-complete}   the connection is $\mathcal P$-complete.
\end{proof}

\begin{theorem}\label{thm: general F covariant}
    Let $G$ be a general $\mathcal{F}$-type Lie group and $\nabla$ be a torsion-free biinvariant Koszul connection on $TG$ defined by Equation \eqref{eq:general biinv}. Then 

    \begin{equation}
        R(x,y)z=\frac{1}{4}z_0(A^2-a^2I)\left(x_0y-y_0x\right),
    \end{equation}

    and 

    \begin{equation}
        \nabla_w(R)(x,y,z)=\frac{a}{2}w_0z_0(A^2-a^2I)(x_0y-y_0x).
    \end{equation}
\end{theorem}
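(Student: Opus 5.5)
The plan is a direct computation with left-invariant fields. Since all the fields involved are left-invariant, the curvature is
\[
R(x,y)z=\nabla_x\nabla_y z-\nabla_y\nabla_x z-\nabla_{[x,y]}z,\qquad x,y,z\in\mathfrak g,
\]
and $\nabla_w(R)(x,y,z)$ is given by the Leibniz formula. I will use two structural facts throughout:
\[
[x,y]=x_0Ay_v-y_0Ax_v\in V,\qquad\text{so }[x,y]_0=0,
\]
by Equation \eqref{eq:lie e11}, and
\[
(\nabla_x y)_0=a\,x_0y_0,
\]
which follows from \eqref{eq:general biinv}. Also $x_0y-y_0x=x_0y_v-y_0x_v\in V$, so $A^2$ acts on it, and the claimed $R$ takes values in $V$.

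\textbf{Step 1: expand $\nabla_x\nabla_y z$.} Apply \eqref{eq:general biinv} twice, using $(\nabla_y z)_0=a y_0z_0$:
\[
\nabla_x\nabla_y z=\tfrac14[x,[y,z]]+\tfrac a4\bigl(y_0[x,z]+z_0[x,y]+x_0[y,z]\bigr)+\tfrac{a^2}{4}x_0(y_0z+z_0y)+\tfrac{a^2}{2}y_0z_0x .
\]

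\textbf{Step 2: antisymmetrize and subtract.} Antisymmetrize Step 1 in $x,y$ and subtract
\[
\nabla_{[x,y]}z=\tfrac12[[x,y],z]+\tfrac a2 z_0[x,y].
\]
The three groups behave as follows.
\begin{itemize}
\item By the Jacobi identity, the double brackets combine to $-\tfrac14[[x,y],z]$.
\item The terms linear in $a$ sum to $\tfrac a2 z_0[x,y]$ before the subtraction, so they cancel exactly against $-\tfrac a2 z_0[x,y]$.
\item The $a^2$ terms give $-\tfrac{a^2}{4}z_0(x_0y-y_0x)$.
\end{itemize}
Finally, $[x,y]\in V$ gives
\[
[[x,y],z]=-z_0A[x,y]=-z_0A^2(x_0y_v-y_0x_v),
\]
and together these yield the stated formula for $R$.

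\textbf{Step 3: the covariant derivative.} Write $u=R(x,y)z\in V$ and $C=\tfrac14(A^2-a^2I)$. For $u\in V$ we have
\[
\nabla_w u=\tfrac12 w_0Au+\tfrac a2 w_0u .
\]
Since $C$ commutes with $A$, this gives
\[
\nabla_w(R(x,y)z)=w_0\bigl(\tfrac12A+\tfrac a2\bigr)Cz_0(x_0y-y_0x).
\]
Next I evaluate $R(\nabla_wx,y)z+R(x,\nabla_wy)z+R(x,y)\nabla_wz$, using again $(\nabla_w x)_0=aw_0x_0$. Expanding $(\nabla_wx)_0y-y_0\nabla_wx$ and the analogous terms, the $[w,\cdot]$ contributions produce $\tfrac12 w_0A(x_0y-y_0x)$ plus terms in $[w,y]$ and $[w,x]$. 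The $\tfrac a2$-parts produce multiples of $w_0(x_0y-y_0x)$ and of $w_0x_0y_0$-type terms, which are symmetric in $x,y$ and should cancel.

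Subtracting, the $A$-terms cancel. What remains should be
\[
\tfrac a2 w_0z_0C(x_0y-y_0x)\;\cdot\;(1+2-\cdots),
\]
to be collected to the stated coefficient. The main obstacle is this bookkeeping: I will handle it by splitting every vector into its $e_0$- and $V$-components, which reduces everything to polynomial identities in $x_0,y_0,z_0,w_0$ times vectors in $V$. I will also check the final coefficient against the special case $w=e_0$, $x=e_0$, $z=e_0$, $y\in V$.
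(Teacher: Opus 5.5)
Your computation of $R$ in Steps 1 and 2 is correct, and it is the direct computation the paper intends. The gap is in Step 3. You never determine the final coefficient; you only assert that it will collect to the stated one. It does not.

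Carry out your own splitting with $C=\tfrac14(A^2-a^2I)$:
\begin{itemize}
\item The first term is $\nabla_w(R(x,y)z)=w_0z_0(\tfrac12A+\tfrac a2)C(x_0y_v-y_0x_v)$.
\item In $R(\nabla_wx,y)z+R(x,\nabla_wy)z$, the terms that cancel by symmetry are the $x_0y_0w_v$ and $x_0y_0Aw_v$ terms, not ``$w_0x_0y_0$-type'' terms. What remains is $w_0z_0C(\tfrac12A+\tfrac{3a}{2})(x_0y_v-y_0x_v)$.
\item The last term is $R(x,y)\nabla_wz=a\,w_0z_0C(x_0y_v-y_0x_v)$, because $(\nabla_wz)_0=aw_0z_0$.
\end{itemize}
The $A$-terms cancel as you say, but the scalar parts give $\tfrac a2-\tfrac{3a}{2}-a=-2a$. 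Hence
\[
(\nabla_wR)(x,y,z)=-2a\,w_0\,R(x,y)z=-\frac a2\,w_0z_0(A^2-a^2I)(x_0y-y_0x).
\]
The special case you planned to check confirms this. For $w=x=z=e_0$ and $y\in V$ one gets
\[
(\tfrac12A+\tfrac a2)Cy-aCy-C(\tfrac12A+\tfrac a2)y-aCy=-2aCy,
\]
whereas the statement predicts $+2aCy$. Already for $\dim V=1$, $A=1$ (i.e.\ $\mathfrak{aff}(1)$) you get $(\nabla_{e_0}R)(e_0,e_1,e_0)=-\tfrac a2(1-a^2)e_1$.

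So the proposal cannot prove the statement as written. The convention $R(x,y)=[\nabla_x,\nabla_y]-\nabla_{[x,y]}$ is the one that makes the first formula true, and under it the second formula has the wrong sign. Reversing the sign convention for $R$ reverses both formulas, so no convention makes the two displayed identities hold simultaneously.

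What your method, carried to the end, actually establishes is the first identity together with the corrected identity $(\nabla_wR)(x,y,z)=-2a\,w_0\,R(x,y)z$. The paper's proof only points to the analogous computation in \cite{FSz26}, so it does not settle the sign. The sign is irrelevant for the paper's subsequent conclusion: $\nabla R\neq0$, so $\nabla$ is not locally symmetric, whenever $a\neq0$ and $A^2\neq a^2I$.
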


\begin{proof}
    Similarly to the previous paper. \cite[Proposition 9.2.]{FSz26}
\end{proof}

It follows that for a generic $a\in\bR$ ($a\neq0$ and $a^2I\neq A^2$) biinvariant connections on general $\mathcal F$-type Lie groups will not be locally symmetric.

\section{Conclusion}

In this paper, we present several new examples for $\mathcal{P}$-complete but not locally symmetric Koszul manifolds. In Theorem \ref{thm:e11 left-invariant} we classified left-invariant pseudo-Riemann metics on $E(1,1)$ in terms of $\mathcal{P}$-completeness. In Theorem \ref{thm:general F biinv} we showed that all biinvariant connections on general $\mathcal F$-type Lie groups are $\mathcal P$-complete, but according to Theorem \ref{thm: general F covariant} they are generically not locally symmetric.

\textbf{Declarations}

\textbf{Conflict of interest} There is no conflict of interest.

\textbf{Ethical approval} Not applicable.


\begin{thebibliography}{9}
\bibitem[1]{A66}
Vladimir I.  Arnold: 
\emph{Sur la géométrie différentielle des groupes de Lie de dimension infinie et ses applications á l'hydrodynamique des fluides parfais}, 
Annales de l'intitut Fourier, tome 16, n°1, 319-361., 1966

\bibitem[2]{G96}
Mohammed Guediri:
\emph{On completeness of left-invariant Lorentz metrics on solvable Lie groups}, Rev. Mat. Univ. Complut. Madrid 9, no. 2, 337-350, 1996
\bibitem[3]{FSz26}
Balázs Forman, Róbert Szőke:
\emph{Polarization complete left-invariant connections} arxiv/2609.15673, 2026

\bibitem[4]{GKKL15}
M. Glas, P. Konstantis, A. Krause and F. Loose:
\emph{Bianchi's classification of 3-dimensional Lie algebras revisited} Journal of Lie Theory 25, 975–1003., 2009


\bibitem[5]{HL09}Ku Yong Ha and Jong Bum Lee 
\emph{Left Invariant Metrics and Curvatures on Connected, Simply Connected
three-Dimensional Lie Groups}
Mathematische Nachrichten. 282. 868 - 898., 2009
\bibitem[6]{HL23}Ku Yong Ha and Jong Bum Lee
\emph{Left Invariant Lorentzian Metrics and Curvatures on non-unimodular Lie groups of dimension three} J. Korean Math. Soc. 60 No. 1, pp. 143-163, 2023
\bibitem[7]{HI03}
Stephan Halverscheid and Andrea Iannuzzi:
\emph{Maximal complexifications of certain homogeneous Riemannian manifolds},
Trans. AMS, 355, No11, 4581-4594, 2003
\bibitem[8]{HI09}
Stephan Halverscheid, Andrea Iannuzzi:
\emph{A family of adapted complexifications for $SL_2(\bR)$},
Ann. Scuola Norm. Pisa Cl. Sci. (5) Vol. VIII (2009), 17-49, 2009

\bibitem[9]{KOTT16}
A. Kubo, K. Onda, Y. Taketomi, and H. Tamaru:
\emph{On the moduli spaces of left-invariant pseudo-Riemannian metrics on Lie groups},
Hiroshima Math. J., 46(3), 357-374, 2016
\bibitem[10]{IV07} J. Inoguchi, J. Van der Veke:
\emph{Parallel surfaces in the motion groups E(1, 1) and E(2)}, Bulletin of the Belgian Mathematical Society,  Simon Stevin 14, 321–332, 2007
\bibitem[11]{LSz91}
László Lempert and  Róbert Szőke:
\emph{Global solutions of the homogeneous, complex Monge-Amp\'ere equation and complex structures on the tangent bundle of Riemannian manifolds}, 290, 689-712, 1991
Math. Ann., 
\bibitem[12]{L19}
László Lempert:
 \emph{Adapted complex and involutive structures} arXiv:1901.03388, 2019
\bibitem[13]{LB80}
Claude LeBrun: 
\emph{Spaces of complex geodesics and related structures},
PhD thesis, Hertford College, Oxford, 1980
\bibitem[14]{LB83}
Claude LeBrun: 
\emph{Spaces of complex null geodesics in complex-Riemannian geometry},
Trans. AMS, Vol. 278, No 1, 209-231, 1983

\bibitem[15]{M76}
John Milnor: 
\emph{Curvatures of left invariant metrics on Lie groups},
Adv. in Math. 21, 293-329, 1976

\bibitem[16]{O83}
Barrett O'Neill:
\emph{Semi-Riemannian geometry},
Academic Press, 1983
\bibitem[17]{PP19}
P.G.Pripoae and C.L.Pripoae:
\emph{Invariant connections on Lie groups}
Balkan Journal of Geometry and Its Applications, Vol 24., No.1., pp.51-64., 2019


\bibitem[18]{Sz01}
Róbert Szőke:
\emph{Involutive structures on the tangent bundle of symmetric spaces},
Math. Ann., 319, 319-348, 2001
\bibitem[19]{Sz04}
Róbert Szőke:
\emph{Canonical complex structures associated to connections and complexifications of Lie groups},
Math. Ann. 329, 553-591, 2004
\bibitem[20]{Sz05}
Róbert Szőke:
\emph{Complex crowns of symmetric spaces},
Intern. J. Math., 16(8), 889-902, 2005
\bibitem[21]{V19}Fabricio Valencia:
\emph{Notes on flat pseudo-Riemannian manifolds} arXiv:1903.08940, 2019

\end{thebibliography}
\end{document}